\newcommand{\margnote}[1]{
\ifthenelse{\boolean{shownotes}}%
{\marginpar{\raggedright\tiny\texttt{#1}}}%
{}%
}
\newcommand{\hole}[1]{
\ifthenelse{\boolean{shownotes}}%
{\begin{center} \fbox{ \rule {.25cm}{0cm} \rule[-.1cm]{0cm}{.4cm}
\parbox{.85\textwidth}{\begin{center} \texttt{#1}\end{center}} \rule
{.25cm}{0cm}}\end{center}} {} }
\title[A gradient flow approach of propagation of chaos]{A gradient flow approach of uniform in time propagation of chaos for particles in double a well confinement}
\author[Salem]{Samir Salem}
\address[Samir Salem]{\newline CEREMADE UMR 7534, \newline
    Universit\'e Paris Dauphine, Place du Mar\'echal de Tassigny, Cedex Paris, France}
\email{salem@ceremade.dauphine.fr}
\numberwithin{equation}{section}
\newtheorem{theorem}{Theorem}[section]
\newtheorem{lemma}{Lemma}[section]
\newtheorem{corollary}{Corollary}[section]
\newtheorem{proposition}{Proposition}[section]
\newtheorem{remark}{Remark}[section]
\newtheorem{definition}{Definition}[section]
\newcommand{\R}{\mathbb R}
\newcommand{\B}{\mathbb{B}}
\newcommand{\II}{\mathcal{I}}
\newcommand{\MM}{\mathcal{M}}
\newcommand{\XX}{\mathcal{X}}
\newcommand{\PP}{\mathcal{P}}
\newcommand{\RR}{\mathcal{R}}
\newcommand{\FF}{\mathcal{F}}
\newcommand{\CC}{\mathcal{C}}
\newcommand{\JJ}{\mathcal{J}}
\newcommand{\mei}{\frac{1}{N}\sum_{i=1}^N}
\newcommand{\mej}{\frac{1}{N}\sum_{j=1}^N}
\newcommand{\e}{\varepsilon}
\newcommand{\mt}{\mathcal{T}}
\newcommand{\lal}{\langle}
\newcommand{\ral}{\rangle}
\newcommand{\lt}{\left}
\newcommand{\rt}{\right}
\newcommand{\pa}{\partial}
\newcommand{\mb}{\mathbf{1}}
\newcommand{\bq}{\begin{equation}}
\newcommand{\eq}{\end{equation}}
\newcommand{\LL}{\mathcal{L}}
\newcommand{\E}{\mathbb{E}}
\def\charf {\mbox{{\text 1}\kern-.30em {\text l}}}
\begin{document}
\allowdisplaybreaks

\date{\today}



\begin{abstract} 
We provide an estimation of the dissipation of the Wasserstein 2 distance between the law of some interacting $N$-particle system, and the $N$ times tensorized product of solution to the corresponding limit nonlinear conservation law. It then enables to recover classical propagation of chaos results \cite{Szn} in the case of Lipschitz coefficients, uniform in time propagation of chaos in \cite{Mal} in the case of strictly convex coefficients. But also some recent results \cite{Gcou} as the case of particle in a double well potential.
\end{abstract}

\maketitle \centerline{\date}

\tableofcontents

%
%
%
%
\section{Introduction}\label{intro}

This paper introduces a new method to prove the so called propagation of chaos (we refer to the classical lecture notes \cite{Szn}) for the interacting particle system
\bq
\label{eq:PS}
X_t^i=X_0^i+\int_0^t \mej b(X_s^i,X_s^j)ds +\sqrt{2}B_t^i,
\eq
where $((B_t^i)_{t\geq 0})_{i=1\cdots,N}$ are $N$ independent $d$-dimensional Brownian motion, $(X_0^i)_{i=1,\cdots,N}$ are random variables of symmetric joint law $G_0^N\in \PP^{\textit{{sym}}}(\R^{dN})$ independent of the $(B^i)_{i=1\cdots,N}$, and $b:\R^{2d}\mapsto \R^d$ is an interaction field.\newline
For the sake of completeness we recall some basic notions on this topic, and refer to \cite{Szn} for some further explanations. We begin with the
\begin{definition}[Definition 2.1 in \cite{Szn}]
	\label{def:chaos}
	Let $(G_N)_{N\geq 1}$ be a sequence of symmetric probabilities on $E^N$ ($G_N\in \PP_{\mbox{sym} }(E^N) $), with $E$ some polish space. We say that $G_N$ is $g$-chaotic, with $g\in \PP(E)$, if for any $k\geq 2$ and $\phi_1,\cdots,\phi_k\in C_b(E)$ it holds
	\[
	\lim_{N\rightarrow +\infty}\int_{E^N}\bigotimes_{j=1}^k\phi_j G_N=\prod_{j=1}^k\lt(\int_{E}\phi_j g\rt).
	\]
\end{definition}
In other words a sequence $(G_N)_{N\geq 1}$ is $g$-chaotic if and only if for any $k\geq 2$
\[
G_N^k\underset{N\rightarrow+\infty}{\overset{*}{\rightharpoonup}}g^{\otimes k} \quad \mbox{in} \quad \PP(E^k),
\] 
where $G_N^k$ is the $k$-particle marginal of $G_N$. It is well known since the seminal work of McKean (\cite{Mck}) (when $b$ is regular), that if the initial law of particle system \eqref{eq:PS} is $\mu_0$-chaotic, then for any further time $t>0$ the law of the solution at time $t>0$ to the particle system is $\mu_t$-chaotic, where $\mu_t$ is the solution to the nonlinear conservation law
\bq
\label{eq:NLPDE}
\begin{aligned}
\pa_t \mu_t&=\nabla \cdot \lt( \nabla \mu_t -(b*\mu_t)\mu_t , \rt)\\
\end{aligned}
\eq
starting from $\mu_0$ (with the notation $b*\mu(x)=\int_{\R^d}b(x,z)\mu(dz)$). Usually this result is proved by coupling method, which consists in introducing a system of nonlinear (independent, non interacting) particle
\[
Y_t^i=X_0^i+\int_0^t b*\mu_s(Y_s^i)ds +\sqrt{2}B_t^i,
\]
driven by the same independent Brownian motions as in \eqref{eq:PS} and the nonlinear force of the limiting solutions. This technique is then called coupling, since the only random objects used to define the trajectories $((X_t^i)_{t\geq 0})_{i=1\cdots,N}$ and $((Y_t^i)_{t\geq 0})_{i=1\cdots,N}$ are the same, namely the initial condition  $(X_0^i)_{i=1\cdots,N}$ and the independent Brownian motions $((B_t^i)_{t\geq 0})_{i=1\cdots,N}$. Therefore the trajectories are built on the same stochastic space, and one says that they are coupled.

Assuming that the $(X_0^i)_{i=1,\cdots,N}$ are i.i.d.of law $\mu_0$ (which is stronger than $\mu_0$-chaotic), makes the $(Y_t^i)_{i=1,\cdots,N}$ i.i.d.of law $\mu_t$ for any $t>0$. Therefore (assuming some second order moments for the sake of simplicity) it holds
\begin{align*}
\E\lt[W^2_2\lt( \mei \delta_{X_t^i},\mu_t  \rt)\rt]&\leq 2\E\lt[W^2_2\lt( \mei \delta_{X_t^i}, \mei \delta_{Y_t^i} \rt)\rt]+2\E\lt[W^2_2\lt( \mei \delta_{Y_t^i},\mu_t  \rt)\rt]\\
&\leq  2\E \lt[ \mei |X_t^i-Y_t^i|^2\rt]+2\E\lt[W^2_2\lt( \mei \delta_{Y_t^i},\mu_t  \rt)\rt].
\end{align*}
But it is straightforward to obtain 
\begin{align*}
\mei\frac{|X_t^i-Y_t^i|^2}{2}&=\mei\int_0^t\lal \mej b(X_s^i,X_s^j)-\int b(Y_s^i,z)\mu_s(dz) ,X_s^i-Y_s^i\ral\\
&\leq \mei\int_0^t\lal \mej b(X_s^i,X_s^j)-\mej b(Y_s^i,Y_s^j) ,X_s^i-Y_s^i\ral ds\\
&+\mei\int_0^t\lal\mej b(Y_s^i,Y_s^j)-\int b(Y_s^i,z)\mu_s(dz) ,X_s^i-Y_s^i\ral ds\\
&\leq \|b\|_{Lip}\int_0^t \mei\frac{|X_s^i-Y_s^i|^2}{2} ds+\mei\int_0^t \lt(\mej b(Y_s^i,Y_s^j)-\int b(Y_s^i,z)\mu_s(dz)\rt)^2ds.
\end{align*}
Then since the $(Y_t^i)_{i=1,\cdots,N}$ are i.i.d.of law $\mu_t$ we straightforwardly obtain 
\begin{align*}
\E&\lt[ \lt(\mej b(Y_s^i,Y_s^j)-\int b(Y_s^i,z)\mu_s(dz)\rt)^2  \rt]\\
&\quad =\frac{1}{N^2}\sum_{j,k=1}^N\E\lt[ \lt( b(Y_s^i,Y_s^j)-\int b(Y_s^i,z)\mu_s(dz)\rt)\lt( b(Y_s^i,Y_s^k)-\int b(Y_s^i,z)\mu_s(dz)\rt)  \rt]\\
&\quad =\frac{1}{N^2}\sum_{j=1}^N\E\lt[ \lt( b(Y_s^i,Y_s^j)-\int b(Y_s^i,z)\mu_s(dz)\rt)^2  \rt]\leq \frac{4\|b\|^2_{L^\infty}}{N},
\end{align*}
so that Gronwall's inequality yields 
\begin{align*}
	\E \lt[ \mei |X_t^i-Y_t^i|^2\rt]\leq \frac{4\|b\|^2_{L^\infty}}{N}e^{\|b\|_{Lip}t}.
\end{align*}
Using again the fact that  the $(Y_t^i)_{i=1,\cdots,N}$ are i.i.d.of law $\mu_t$, then \cite{FG} we obtain for some constant $C>0$ depending on $(\mu_t)_{t\geq 0}$
\begin{align*}
\E\lt[W^2_2\lt( \mei \delta_{X_t^i},\mu_t  \rt)\rt]\leq \frac{4\|b\|^2_{L^\infty}e^{\|b\|_{Lip}t}+C}{N}.
\end{align*}

Note that the only feature of the $(Y_t^i)_{i=1,\cdots,N}$ which is used by this proof is that they are i.i.d.of law $\mu_t$ for any $t>0$, and are built on the same probability space as the $(X_t^i)_{i=1,\cdots,N}$ (coupled). Due to the choice which has been made to drive the trajectories of the $(Y_t^i)_{i=1,\cdots,N}$ by the exact same Brownian motions as the ones which drive the $(X_t^i)_{i=1,\cdots,N}$, this kind of coupling is called synchronous. In some sense, it is the cheapest coupling one can think of. Indeed it does not take in account the diffusion, and makes the proof of propagation of chaos for the particle system \eqref{eq:PS} very similar to the proof of the mean field limit for the same particle system without diffusion (see for instance \cite{Dob}). This is the cause of some confusion between these two notions in the literature.   \newline
Later another coupling approach has been introduced, which turns out to be particularly powerful in the context of uniform in time propagation of chaos. Instead of defining the nonlinear particles as above, define them as
\begin{align*}
&Y_t^i=X_0^i+\int_0^t b*\mu_s(Y_s^i)ds +\sqrt{2}\int_0^t\lt( I_d-2\mathbf{e}^i_s \lal\mathbf{e}^i_s,\cdot\ral \rt)dB_s^i, \ t\leq T_i, \ Y_t^i=X_t^i \ t\geq T_i,\\
\end{align*}
with $T_i=\inf\{t\geq 0, X_t^î=Y_t^i\}$ and $\mathbf{e}^i_t=\frac{X_t^i-Y_t^i}{|X_t^i-Y_t^i|}$. Defined this way, the $(Y_t^i)_{i=1,\cdots,N}$ still enjoy the properties used in the above argument. But the advantage is that now, compared to the synchronous coupling approach, the difference between $X_t^i$ and $Y_t^i$ includes the diffusive part $-2\mb_{t\leq T_i}\int_0^t\mathbf{e}^i_s \lal\mathbf{e}^i_s,dB_s^i\ral$ which can be particularly handy in the context of asymptotic in time estimate. This technique has been used recently in a general framework \cite{Gcou}, which enables to treat the tricky particular case of particles in a double well confinement interacting through small Lipschitz interaction.  \newline
The aim of this paper is to address this question in an analytical framework. The rest of the paper is organized as follows. In Section 2, we recall some earlier results about gradient flow in probability spaces and dissipation of Wasserstein 2 metric, give a new proof of the classical results presented in the Introduction and state the main theorem of this paper. In Section 3, we give some tools about optimal transport on $\PP^{\textit{sym}}(\R^{dN})$ which enables to use the techniques recalled in Section 2 at the microscopic level. In section 4 we give the proof of the main theorem of this paper.\newline
 In the rest of the paper (and sometimes in the above Introduction as well), $\MM_k(\R)$ stands for the square matrix of size $k$ with real coefficient, $Tr\lt[\cdot\rt]$ stands for the trace operator on this space, and $\cdot^\perp $ stands for the transpose (for a vector or rectangular matrix as well). $\PP_k^{\textit{sym}}(\R^{dN})$ will denote, depending on the context, either for the set of symmetric probability measures over $\R^{dN}$, with $N\geq 2$, and with order $k$ moment. When we will consider some probability measure, it will implicitly always be assumed absolutely continuous w.r.t. the Lebesgue with smooth density, and we will often abuse the confusion between a measure and its density. For a convex functional $\psi$, we denote $\psi^*$ its Legendre convex conjugate. $W_2$ stands for the Wasserstein metric (see for instance \cite{Vil1}). We will use the shortcuts $\sum_{i,j=1}^N=\sum_{i=1}^N\sum_{j=1}^N$ and $\sum_{i\neq j}^N=\sum_{i=1}^N\sum_{j\neq i}^N$. For a function $f\in \CC^2(\R^{dN})$, and $i=1\cdots,N$ we denote $\nabla_i f \in \R^d$ the gradient, and $\nabla^2_{i,i}f \in \MM_d(\R)$ the Hessian matrix w.r.t. to the $i$-th component.

%
%

\section{Preliminaries and main results}
The main motivation of this paper is the following. Instead of trying to find a more or less optimal coupling between the law of the particle system \eqref{eq:PS} and the solution to \eqref{eq:NLPDE}, at the level of particles trajectories as described in the Introduction, we can use the gradient flow approach which links optimal transport, entropy and heat flow (watch for instance \cite{Vil}).

\subsection{Wasserstein 2 metric dissipation and WJ inequality}
We begin this section by the following observation. \newline
If we denote $G_t^N\in \PP^{\textit{sym}}(\R^{dN})$ the law of the random vector $(X_t^1,\cdots,X_t^N)$ solution at time $t>0$ to \eqref{eq:PS} then it solves the following Louiville equation
\bq
\label{eq:Loui}
\begin{aligned}
	&\pa_t G_t^N=\Delta G_t^N-\nabla\cdot\lt( \mathbf{b}^NG_t^N  \rt)\\
	&G^N_{|t=0}=G_0^N,
\end{aligned}
\eq
with
\[
\mathbf{b}^N:(x_1,\cdots,x_N)\in \R^{dN}\mapsto\lt(  \mei b(x_1,x_i),\cdots, \mei b(x_N,x_i) \rt)\in \R^{dN}.
\]
Which we can rewrite
\begin{align*}
	&\pa_t G_t^N=\nabla \cdot \lt( \lt( \nabla \ln G_t^N- \mathbf{b}^N \rt) G_t^N   \rt) \\
	&G^N_{|t=0}=G_0^N
\end{align*} 
so that $G_t^N$ can be seen as the pushed forward of $G_0^N$ by the application $\xi^{N}_{t,0}$ defined as
\begin{align*}
	\xi^{N}_{t,0}(X^N)=X^N+\int_0^t\lt(\mathbf{b}^N-\nabla \ln G_s^N\rt)(\xi^{N}_{s,0}(X^N))ds
\end{align*}

Let now be $(\mu_t)_{t\geq 0}$ the solution to \eqref{eq:NLPDE} starting from $\mu_0$. Similarly $\mu_t^{\otimes N}$ is the pushed forward of $\mu_0^{\otimes N}$ by the application $\zeta_{t,0}^{\otimes N}$
\begin{align*}
	\zeta_{t,0}(x)=x+\int_0^t\lt(b*\mu_s-\nabla \ln \mu_s\rt)(\zeta_{s,0}(x))ds
\end{align*}

Before giving the main result of this section, we recast some notions about dissipation of $W_2$ distance along continuous curves of $\PP_2$, taken from \cite{BGG1,BGG2}

\begin{definition}
	Let be $\mu,\nu\in\PP_2(\R^d)$ and $\psi\in \CC^2(\R^d)$ the maximizing Kantorovitch potential (m.K.p.) associated to the pair $(\nu,\mu)$ as given by Brenier's Theorem, i.e. such that $\nabla\psi \# \nu=\mu$ optimally w.r.t. the $W_2$ metric and define the functional $\JJ$ as
	\label{WJineeq}
	\bq
	\label{eq:WJineeq}
	\begin{aligned}
		\JJ(\mu|b,\nu):&=\int_{\R^d}\lt(\Delta \psi(x)+\Delta \psi^*(\nabla\psi)-2d\rt)\nu(dx)\\
		&+\int_{\R^{2d}} \lal b(\nabla \psi(x),\nabla \psi(y))-b(x,y)   ,\nabla \psi(x)-x  \ral\nu(dx)\nu(dy).
	\end{aligned}
	\eq
	We say that $\nu\in \PP_2(\R^d)$ satisfies a $WJ(\kappa)$ inequality if and only if there is some constant $\kappa>0$ such that for any $\mu\in \PP_2(\R^d)$ it holds
	\[
	\kappa W_2^2(\mu,\nu)\leq \JJ(\mu|b,\nu).
	\]
	More generally, $\nu\in \PP_2(\R^d)$ satisfies a \textit{symmetric} $WJ(\kappa)$ inequality if and only if there is some constant $\kappa>0$ such that for any $N\geq 2$, $G^N\in \PP_2^{\textit{sym}} (\R^{dN})$ it holds
	\label{chWJineeq}
	\bq
	\label{eq:chWJineeq}
	\begin{aligned}
		\kappa W_2^2(G^N,\nu^{\otimes N})\leq \JJ(G^N|\mathbf{b}^N,\nu^{\otimes N})&=\int_{\R^{dN}}\lt(\Delta \psi^N(x)+\Delta \psi^{*N}(\nabla\psi^N)-2dN\rt)\nu^{\otimes N}(dx)\\
		&+\frac{1}{N}\sum_{i,j=1}^N\int_{\R^{dN}} \lal b(\nabla_i \psi^N(x),\nabla_j \psi^N(x))-b(x_i,x_j)   , \nabla_i \psi^N(x)-x_i  \ral\nu^{\otimes N}(dx)
	\end{aligned}
	\eq
	where $\psi^N$ is the m.K.p. associated to the pair $(\nu^{\otimes N},G^N)$. 
\end{definition}

\begin{remark}
	\label{rq:WJ}
	The first part of this Definition is taken from \cite[Definition 3.1]{BGG1}. One can check that if $\nu$ satisfies a \textit{symmetric} WJ($\kappa$) inequality, then it satisfies a WJ($\kappa$) inequality in the sense of \cite[Definition 3.1]{BGG1}. Indeed for any $\mu\in \PP_2(\R^d)$ if $\psi^N$ is the m.K.p. associated to the pair $(\mu^{\otimes N},\nu^{\otimes N})$ and $\psi$ the one associated to the pair $(\mu,\nu)$, one has for any $(x_1,\cdots,x_N)\in \R^{dN}$  
	\[
	\psi^N(x_1,\cdots,x_N)=\sum_{i=1}^N\psi(x_i).
	\]
	So that
		\[
		W_2^2(\mu^{\otimes N},\nu^{\otimes N})=NW_2^2(\mu,\nu), \ \JJ(\mu^{\otimes N}|\mathbf{b}^N,\nu^{\otimes N})=N  \JJ(\mu|b,\nu).
		\]
	Conversely, it does not seem clear whether or not, if $\nu$ satisfies a WJ($\kappa$) inequality in the sense of \cite[Definition 3.1]{BGG1} then it satisfies a \textit{symmetric} WJ($\kappa$) inequality. Even though in practice, the same estimates which enable to establish that some probability measure satisfies a WJ inequality enable to establish that it also satisfies a symmetric WJ inequality, as the reader can check in the proof of Proposition \ref{prop:WJ}.

\end{remark}

The functional $\JJ$ defined so, is the rate of time dissipation of the Wasserstein 2 distance between two solutions to \eqref{eq:NLPDE}. We emphasize that this rate is by construction sharper than the rate of time dissipation of synchronous coupling between two such solutions, which would be given (roughly speaking) by only the second term in the r.h.s. of \eqref{eq:WJineeq} or \eqref{eq:chWJineeq}. In the symmetrical case, this functional appears in the estimate of the time dissipation of the $W_2$ distance between the solution to the Louiville equation associated to the $N$-particles system \eqref{eq:PS} and the $N$ times tensor product of the solution to \eqref{eq:NLPDE}. It is the object of the

\begin{proposition}
	\label{prop:main}
	Let be $N\geq 2$, $(G_t^N)_{t\geq 0}$ be the law of solution to the particle system \eqref{eq:PS} starting with initial condition of law $G_0^N\in \PP^{\textit{sym}}_2(\R^{dN})$,  and $(\mu_t)_{t\geq 0}$ be the solution to \eqref{eq:NLPDE} starting from $\mu_0\in  \PP_2(\R^d)$. Assume that the vector fields $\mathbf{b}^N-\nabla \ln G^N,b*\mu-\nabla \ln \mu$ satisfy 
	\[
	\int_0^t\int_{ \R^{dN} }\lt|\mathbf{b}^N-\nabla \ln G_s^N\rt|^2G_s^N \ ds+\int_0^t\int_{ \R^{d} }\lt|b*\mu_s-\nabla \ln \mu_s\rt|^2\mu_s \ ds<\infty, \ \text{for any} \ t \geq 0,
	\]
	and are locally Lipschitz. Then for any $\eta>0,t\geq 0$ it holds
	\[
	W_2^2(\mu_t^{\otimes N},G^N_t)\leq W_2^2(\mu_0^{\otimes N},G^N_0)-\int_0^t \lt(\JJ(G^N_s|\mathbf{b}^N,\mu^{\otimes N}_s)-\eta W_2^2(\mu_s^{\otimes N},G^N_s)\rt)ds+\eta^{-1}\int_0^t\FF_N(b,\mu_s)  ds,
	\]
	with 
	\[
	\FF_N(b,\mu_s)=\frac{1}{N^2}\sum_{i\neq j}^N \int_{\R^{dN}} \int_{\R^d} \lt| b(x_i,x_j)-b(x_i-z,x_j) \rt|^2\mu(dz) \mu^{\otimes N}(dx).
	\]
\end{proposition}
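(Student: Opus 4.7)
The plan is an Otto-calculus computation of the time derivative of $\frac{1}{2}W_2^2(\mu_t^{\otimes N},G_t^N)$, followed by Young's inequality to absorb the mean-field defect. Under the stated integrability and local Lipschitz assumptions, both $(\mu_t^{\otimes N})$ and $(G_t^N)$ are absolutely continuous curves in $(\PP_2(\R^{dN}),W_2)$, and their velocity fields are read directly off \eqref{eq:Loui} and (coordinate-wise) \eqref{eq:NLPDE}, giving $v_t^G=\mathbf{b}^N-\nabla\ln G_t^N$ and $(v_t^{\mu^{\otimes N}})_i(x)=b*\mu_t(x_i)-\nabla\ln\mu_t(x_i)$. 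Applying the Ambrosio--Gigli--Savar\'e formula with the Brenier map $T_t=\nabla\psi_t^N\colon\mu_t^{\otimes N}\to G_t^N$ produces
\[
\frac{d}{dt}\frac{1}{2}W_2^2(\mu_t^{\otimes N},G_t^N)=\int_{\R^{dN}}\langle v_t^{\mu^{\otimes N}}(x)-v_t^G(\nabla\psi_t^N(x)),\,x-\nabla\psi_t^N(x)\rangle\,\mu_t^{\otimes N}(dx).
\]

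Next I would split the integrand into its entropy and drift pieces. After the change of variable $y=\nabla\psi_t^N(x)$ (using $\nabla\psi_t^{*N}\circ\nabla\psi_t^N=\mathrm{id}$) and integration by parts, the two $-\nabla\ln$ contributions combine to $-\int(\Delta\psi_t^N+\Delta\psi_t^{*N}(\nabla\psi_t^N)-2dN)\mu_t^{\otimes N}$, reproducing the first line of $\JJ(G_t^N|\mathbf{b}^N,\mu_t^{\otimes N})$. For the drift part, expanding $\mathbf{b}^N(\nabla\psi_t^N(x))_i=\mej b(\nabla_i\psi_t^N,\nabla_j\psi_t^N)$ and inserting the telescoping term $\frac{1}{N}\sum_{i,j}\int\langle b(x_i,x_j),\nabla_i\psi_t^N-x_i\rangle\mu_t^{\otimes N}$ isolates the interaction term of $\JJ$ from the residue
\[
E=\sum_{i=1}^N\int\big\langle\mej b(x_i,x_j)-b*\mu_t(x_i),\,\nabla_i\psi_t^N(x)-x_i\big\rangle\mu_t^{\otimes N}(dx),
\]
which is exactly the failure of the empirical drift to match the mean-field one along the transport.

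A Young inequality with parameter $\eta$ then gives $E\le\frac{\eta}{2}W_2^2(\mu_t^{\otimes N},G_t^N)+\frac{1}{2\eta}R_N$ with $R_N=\sum_i\int|\mej b(x_i,x_j)-b*\mu_t(x_i)|^2\mu_t^{\otimes N}$. Assembling the three contributions yields
\[
\frac{d}{dt}W_2^2(\mu_t^{\otimes N},G_t^N)\leq -\JJ(G_t^N|\mathbf{b}^N,\mu_t^{\otimes N})+\eta W_2^2(\mu_t^{\otimes N},G_t^N)+\eta^{-1}R_N,
\]
and integrating in $s\in[0,t]$ delivers the claimed inequality, provided one controls $R_N$ by $\FF_N(b,\mu_t)$.

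The main obstacle is precisely this last pointwise bound $R_N\le\FF_N$. Expanding the square and using that the coordinates under $\mu_t^{\otimes N}$ are i.i.d.\ of law $\mu_t$ kills the cross-terms $j\neq k$ by conditional centering on $x_i$, leaving diagonal contributions of the form $\frac{1}{N^2}\int|b(x_i,x_j)-\int b(x_i,z)\mu_t(dz)|^2\mu_t^{\otimes N}$ for $i\neq j$ (plus an $O(1/N)$ $j=i$ remainder). Rewriting $\int b(x_i,z)\mu_t(dz)$ through a change of variable that replaces $z$ by the increment $x_i-z$ and then applying Jensen to pass the square inside the $\mu_t(dz)$ integral produces precisely the translation-modulus integrand $|b(x_i,x_j)-b(x_i-z,x_j)|^2$ of $\FF_N$. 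Everything else is routine bookkeeping; the crux is capturing the mean-field fluctuations by the specific translation modulus $\FF_N$ rather than by a crude $\|b\|_\infty^2/N$ LLN bound. The remainder is a symmetric $N$-coordinate adaptation of the Bolley--Gentil--Guillin Otto-calculus dissipation estimate.
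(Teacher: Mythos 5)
Your proposal follows essentially the same route as the paper: the Ambrosio--Gigli--Savar\'e/Villani derivative formula for $W_2^2$ along the two absolutely continuous curves, the change of variables $y=\nabla\psi_t^N(x)$ plus integration by parts to produce the Laplacian term of $\JJ$, the telescoping of the drift into the interaction term of $\JJ$ plus the mean-field defect, Young's inequality with parameter $\eta$, and the expansion of the variance $R_N$ using independence to kill the cross terms $j\neq k$. All of that matches the paper's proof step for step.

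The one place where your argument does not go through as written is the very last reconciliation of $R_N$ with the stated $\FF_N$. Jensen applied to $\lt(b(x_i,x_j)-\int b(x_i,z)\mu(dz)\rt)^2$ yields $\int |b(x_i,x_j)-b(x_i,z)|^2\mu(dz)$, where the perturbation sits in the \emph{second} argument of $b$; the substitution $z\mapsto x_i-z$ that would move it into a translation of the first argument does not preserve $\mu(dz)$, so you do not literally obtain the integrand $|b(x_i,x_j)-b(x_i-z,x_j)|^2$ of the Proposition's $\FF_N$ for a general kernel. You should be aware, however, that this mismatch is already present in the paper itself: its proof stops at the identity $\FF_N(b,\mu)=\frac{1}{N^2}\sum_{i\neq j}\int(b(x_i,x_j)-b*\mu(x_i))^2\mu^{\otimes N}(dx)$, which is what both you and the paper actually prove, and which is what is used in the later applications. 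So the substance of your proof is correct and identical to the paper's; only the cosmetic identification with the translation-modulus form of $\FF_N$ is unjustified, on both sides.
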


The proof of this Proposition relies on, by now, classical results, so that we give it in this section 

\begin{proof}
Let be $\psi_t^N$ be the m.K.p. associated to the pair $(\mu_t^{\otimes N},G_t^N)$ (i.e. $\nabla \psi_t^N\#\mu_t^{\otimes N}=G_t^N$ and $\nabla \psi_t^{N*}\#G_t^N=\mu_t^{\otimes N}$). We make a similar use of \cite[Theorem 23.9]{Vil1} or \cite[Theorem 8.4.7]{Amb}, as is done in \cite{BGG1,BGG2} to obtain
	
	\begin{align*}
	W_2^2(G_t^{N},\mu_t^{\otimes N})&=-2\int_0^t \int_{\R^{dN}}\lal \mathbf{b}^N(x)-\nabla \ln G_s^N(x) ,  \nabla \psi_s^{N*}(x) -x\ral G^N_s(dx) \ ds \\
	&- 2\int_0^t \int_{\R^{dN}}\lal \lt(b*\mu_s-\nabla \ln \mu_s\rt)^{\otimes N}(x) ,  \nabla \psi_s^{N}(x) -x\ral \mu_s^{\otimes N}(dx) \ ds.
	\end{align*}
 Then using that $\nabla \psi^{N*}(\nabla \psi^N(x))=x$ we rewrite
	\begin{align*}
	W_2^2(G_t^{N},\mu_t^{\otimes N})&=-2\int_0^t \int_{\R^{dN}}\lal \mathbf{b}^N(\nabla \psi^N_s(x))-\mathbf{b}^N(x) , \nabla \psi_s^N(x)-x \ral\mu_s^{\otimes N}(dx) \ ds \\
	&+2\int_0^t \int_{\R^{dN}}\lal \mathbf{b}^N(x)-b*\mu_s^{\otimes N}(x) , \nabla \psi_s^N(x)-x \ral\mu_s^{\otimes N}(dx) \ ds\\
	&+ 2\int_0^t \lt(\int_{\R^{dN}}\lal \nabla  G_s^N(x) ,  \nabla \psi_s^{N*}(x) -x\ral dx+\int_{\R^{dN}}\lal \nabla \mu_s^{\otimes N}(x) ,  \nabla \psi_s^{N}(x) -x\ral \rt) ds\\
	&=\II_1+\II_2+\II_3,
	\end{align*}
	$\II_1$ is untouched. Then by integration by parts we obtain
		\begin{align*}
		\II_3&=2\int_0^t \lt(\int_{\R^{dN}}\lal \nabla  G_s^N(x) ,  \nabla \psi_s^{N*}(x) -x\ral dx+\int_{\R^{dN}}\lal \nabla \mu_s^{\otimes N}(x) ,  \nabla \psi_s^{N}(x) -x\ral \rt) ds\\
		&=-2\int_0^t\lt(\int_{\R^{dN}}  G_s^N(x) \lt(\Delta \psi^{N*}(x)-dN\rt)  dx + \int_{\R^{dN}}  \mu_s^{\otimes N}(x) \lt(\Delta \psi^{N}(x)-dN\rt)  dx \rt) ds\\
		&=-2\int_0^t\int_{\R^{dN}}\lt(\Delta \psi^{N*}(\nabla \psi(x))+\Delta \psi^{N}(x)-2dN\rt) \mu_s^{\otimes N}(dx)ds.
		\end{align*}

	Finally using Young's inequality yields
	\[
	\II_3\leq 2 \eta \int_0^t W_2^2(G_s^N,\mu_s^{\otimes N})ds+\eta^{-1}\int_0^t\int_{\R^{dN}}\lt|\mathbf{b}^N(x)-(b*\mu_s)^{\otimes N}(x)\rt|^2\mu_s^{\otimes N}(dx).
	\]
	But observe that
	
		\begin{align*}
			\lt| \mathbf{b}^N(x)-(b*\mu)^{\otimes N}(x)\rt|^2&=\sum_{i=1}^N \lt| \mej b(x_i,x_j)-b*\mu(x_i) \rt|^2\\
			&\leq \frac{1}{N^2}\sum_{i=1}^N \sum_{j,k=1}^N \lt(b(x_i,x_j)-b*\mu(x_i) \rt)\lt(b(x_i,x_k)-b*\mu(x_i) \rt),
		\end{align*}
		then observe that for $j\neq k$ we have
		\begin{align*}
			&\int_{\R^{dN}}\lt(b(x_i,x_j)-b*\mu(x_i) \rt)\lt(b(x_i,x_k)-b*\mu(x_i) \rt) \mu^{\otimes N}(dx)\\
			&=\int_{\R^{d(N-2)}} \lt(\int_{\R^d}\lt(b(x_j,x_k)-b*\mu(x_i) \rt) \mu(dx_j)\rt) \lt(\int_{\R^d}\lt(b(x_i,x_k)-b*\mu(x_i) \rt) \mu(dx_k)\rt)\mu^{\otimes (N-2)}(dx'_{j,k})\\
			&=0,
		\end{align*}
		so that
		\begin{align*}
			\FF_N(b,\mu)=&\frac{1}{N^2}\sum_{i=1}^N \sum_{j\neq i}^N\int_{\R^{dN}} \lt(b(x_i,x_j)-b*\mu(x_i) \rt)^2\mu^{\otimes N}(dx).
		\end{align*}
	
\end{proof}

\subsection{Main result}

It is possible to recover from this estimate, some propagation of chaos results obtained by synchronous coupling. All the discussion below relies on the assumption that the vector field $\mathbf{b}^N-\nabla \ln G^N$ matches the required technical assumptions. We will carefully check that these assumptions are indeed fulfilled later in the paper, when $b$ is explicitly defined. \newline
We just obtained in Proposition \ref{prop:main} that
	\begin{align*}
	W_2^2(\mu_t^{\otimes N},G^N_t)&\leq W_2^2(\mu_0^{\otimes N},G^N_0) -2\int_0^t \int_{\R^{dN}} \lt(\Delta \psi_s^N+\Delta \psi_s^{*,N }(\nabla \psi_s^N)-2dN\rt)\mu_s^{\otimes N}(dx)\\
	&+2\int_0^t \int_{\R^{dN}}\lal \mathbf{b}^N(\nabla \psi^N_s(x))-\mathbf{b}^N(x) , \nabla \psi_t^{N}(x)-x \ral \mu_t^{\otimes N}(dx) \\	
	&+\eta^{-1}\int_0^t \FF^2(\mu_s,b) ds    +\eta\int_0^t W^2_2(\mu_s^{\otimes N},G^N_s) ds.\\
	\end{align*}
	Using \cite[Lemma 3.2]{BGG1} we have for any $x\in \R^{dN}$
	\[
	-2 \lt(\Delta \psi_s^N(x)+\Delta \psi_s^{*,N }(\nabla \psi_s^N(x))-2dN\rt)\leq 0,
	\]
	and therefore
 	\begin{align*}
	W_2^2(\mu_t^{\otimes N},G^N_t)&\leq W_2^2(\mu_0^{\otimes N},G^N_0) +2\int_0^t \int_{\R^{dN}}\lal \mathbf{b}^N(\nabla \psi^N_s(x))-\mathbf{b}^N(x) , \nabla \psi_t^{N}(x)-x \ral \mu_t^{\otimes N}(dx) \\	
	&+\frac{\eta^{-1}}{N^2}\sum_{i\neq j}^N \int_0^t  \int_{\R^{dN}} \lt(b(x_i,x_j)-b*\mu(x_i) \rt)^2\mu_s^{\otimes N}(dx) ds    +\eta\int_0^t W^2_2(\mu_s^{\otimes N},G^N_s) ds.
	\end{align*}
	Taking no advantage of the negative term we have just thrown away, can be seen as analogous to what is done in synchronous coupling. The method is then quite close to the one used in \cite{Gol} in the quantum framework, the main difference being that there it uses the coupling version of the definition of Wasserstein metric, whereas here it uses the optimal transport one. Therefore we can reprove in this way any propagation of chaos result obtained by synchronous coupling.\newline	
	Let us first consider the most classical one, studied in \cite{Szn}, where $b\in Lip(\R^d\times \R^d)\cap L^\infty(\R^d\times\R^d)$ is a bounded Lipschitz field. Hence in this case it is straightforward to obtain
	\begin{align*}
	W_2^2(\mu_t^{\otimes N},G^N_t)&\leq W_2^2(\mu_0^{\otimes N},G^N_0) +(2\|b\|_{Lip}+\eta)\int_0^t W^2_2(\mu_s^{\otimes N},G^N_s) ds+4 \eta^{-1}\|b\|^2_{L^\infty}t, 
	\end{align*}
	and then by Gronwall's inequality
	\begin{align*}
	\frac{W_2^2(\mu_t^{\otimes N},G^N_t)}{N}\leq \lt( \frac{W_2^2(\mu_0^{\otimes N},G^N_0)}{N}+\frac{4 \eta^{-1}\|b\|^2_{L^\infty}}{N}\rt)e^{(2\|b\|_{Lip}+\eta)t}.
	\end{align*}
	We now consider the case studied in \cite{Mal}, of $b(x,y)=-\nabla U(x)-\nabla W(x-y)$ where $W$ is convex, even, with polynomial growth, and $U$ satisfies $\nabla^2 U(x)>\beta I_d$ with $\beta>0$, for any $x\in \R^d$. Then in this case we have by oddness of $\nabla W$
 \begin{align*}
 W_2^2&(\mu_t^{\otimes N},G^N_t)\leq W_2^2(\mu_0^{\otimes N},G^N_0) -2\sum_{i=1}^N\int_0^t \int_{\R^{dN}}\lal \nabla U(\nabla_i \psi^N_s(x))-\nabla U(x_i) , \nabla_i \psi_s^{N}(x)-x_i \ral \mu_t^{\otimes N}(dx) \\
 &-2\sum_{i=1}^N\int_0^t \int_{\R^{dN}}\mej\lal \nabla W(\nabla_i \psi^N_s(x)-\nabla_j\psi^N_s(x))-\nabla W(x_i-x_j) , \nabla_i \psi_s^{N}(x)-\nabla_j\psi_s^N(x)-(x_i-x_j) \ral \mu_t^{\otimes N}(dx) 	\\
 &+\frac{\eta^{-1}}{N^2}\sum_{i\neq j}^N \int_0^t  \int_{\R^{dN}} \lt(\nabla W(x_i-x_j)-\nabla W*\mu(x_i) \rt)^2\mu_s^{\otimes N}(dx) ds    +\eta\int_0^t W^2_2(\mu_s^{\otimes N},G^N_s) ds\\
 &\leq  W_2^2(\mu_0^{\otimes N},G^N_0) -(2\beta-\eta)\int_0^t W^2_2(\mu_s^{\otimes N},G^N_s) ds+\frac{\eta^{-1}}{N^2}\sum_{i\neq j}^N \int_0^t  \int_{\R^{dN}} \lt(\nabla W(x_i-x_j)-\nabla W*\mu(x_i) \rt)^2\mu_s^{\otimes N}(dx) ds. 
 \end{align*}
 Using the polynomial growth assumption on $W$ and assuming uniform in time estimate for moment of some adequate order on $\mu_t$ yields
   \begin{align*}
   W_2^2&(\mu_t^{\otimes N},G^N_t)\leq  W_2^2(\mu_0^{\otimes N},G^N_0) -(2\beta-\eta)\int_0^t W^2_2(\mu_s^{\otimes N},G^N_s) ds+\eta^{-1}Ct
   \end{align*}
    and choosing $\eta<2\beta$ yields the following uniform in time propagation of chaos result
	\begin{align*}
	\frac{W_2^2(\mu_t^{\otimes N},G^N_t)}{N}\leq  \frac{W_2^2(\mu_0^{\otimes N},G^N_0)}{N}e^{-(2\beta-\eta)t}+\frac{1-e^{-(2\eta-\eta)t}}{2\beta-\eta}\frac{\eta^{-1}C}{N}
	\end{align*}
	It could be possible to go on revisiting some synchronous coupling result. But we stop it here and emphasize that this method has no hope to apply in the degenerate diffusion cases, such as hypoelliptic equations. Also it could be interesting to introduce the reverse crossed term in the Proof of Proposition \ref{prop:main} to investigate what could be done with the identity
		\begin{align*}
			W_2^2(G_t^{N},\mu_t^{\otimes N})&=2\int_0^t \int_{\R^{dN}}\lal \mathbf{b}^N(\nabla \psi^N_s(x))-(b*\mu_s)^{\otimes N}(\nabla \psi(x)) , \nabla \psi_t^{N}(x)-x \ral \mu_t^{\otimes N}(dx) \\
			&+2\int_0^t \int_{\R^{dN}}\lal \lt(\nabla \ln G_s^N\rt)(\nabla\psi_t^{N}(x))- \lt(\nabla \ln \mu_s\rt)^{\otimes N}(x) ,  \nabla \psi_t^{N}(x)-x \ral \mu_t^{\otimes N}(dx)\\
			&+2\int_0^t \int_{\R^{dN}}\lal (b*\mu_s)^{\otimes N}(\nabla \psi(x))-(b*\mu_s)^{\otimes N}(x) , \nabla \psi_t^{N}(x)-x \ral \mu_t^{\otimes N}(dx),
		\end{align*}
		which would correspond to what is done, at the particle level, in \cite{Hol} in the case of Holder interaction. Keeping in mind that the dissipation of $W_2^2$ along the heat flow is the relative entropy, it could be interesting to investigate the links with the relative entropy method recently introduced in \cite{JW1,JW2} to see if the method presented here could be adapted to non uniform in time propagation of chaos for the kinetic or singular interaction cases, or the relative entropy method to the uniform in time propagation of chaos for non convex confinement. We delay all these questions to some possible future works.\newline
	
	In all the illustrated examples presented above, the only advantage of the gradient flow approach with respect to the usual synchronous coupling technique, is that it enables more straightforwardly to have the particles system \eqref{eq:PS} starting from initial condition which are else than i.i.d. of law $\mu_0$. \newline
	
	If one wants to obtain some more interesting result, we have to make a better use of the nonnegative term 
	\[
	-2 \lt(\Delta \psi_s^N(x)+\Delta \psi_s^{*,N }(\nabla \psi_s^N(x))-2dN\rt)\leq 0,
	\]
	in the dissipation functional $-\JJ$, that we have thrown away so far. In probabilistic terms, this consideration is the same as the one already discussed in the introduction regarding synchronous and reflecting coupling. Likewise to \cite{Gcou}, it then enables to obtain uniform in time result in the case of convex outside some ball confinement, given in the

	\begin{theorem}
	\label{thm:main}
	Assume that $b(x,y)=-\nabla V(x)-\e\nabla W(x-y)$, with $V(x)=|x|^4-a|x|^2$ and $W(x)=- |x|^2$. There is $a^*>0$ such that if $a\in (0,a^*)$ and $\e\in (0,\e_a)$, $\mu_\infty\in \PP_2(\R^d)$ is the unique stationary solution to \eqref{eq:NLPDE}, and $G_t^N\in \PP^{\textit{sym}}(\R^{dN})$ the law at time $t>0$ of the solution to \eqref{eq:PS} with initial law $G_0^N\in\PP^{\textit{sym}}_6(\R^{dN})\cap L\ln L(\R^{dN})$, then there are  constants $C,\alpha>0$ such that for any $t\geq 0$, $N\geq 2$ it holds	
	\[
	\frac{W_2^2(G_t^N,\mu_\infty^{\otimes N})}{N}\leq \frac{W_2^2(G_0^N,\mu_\infty^{\otimes N})}{N}e^{- \alpha t}+\frac{C}{N}.
	\]
	\end{theorem}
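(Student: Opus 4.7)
The plan is to apply Proposition~\ref{prop:main} with $\mu_s\equiv\mu_\infty$ constant in time, and then close the resulting differential inequality via a symmetric $WJ(\kappa)$ inequality for $\mu_\infty$. First I would check the integrability and regularity hypotheses of Proposition~\ref{prop:main}: the stationary solution has the Gibbs form $\mu_\infty\propto\exp(-V-\e W*\mu_\infty)$ with Gaussian-like tails, so in particular $b*\mu_\infty-\nabla\ln\mu_\infty\equiv 0$ and only the $N$-particle side is nontrivial; the $L\ln L$ bound on $G_0^N$, parabolic regularization of the Liouville equation \eqref{eq:Loui} (whose drift $\mathbf{b}^N$ is polynomial and whose diffusion is non-degenerate), and standard entropy/Fisher-information inequalities deliver the required smoothness and $L^2$-integrability of $\mathbf{b}^N-\nabla\ln G_s^N$ for $s>0$.

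Granted this, combining Proposition~\ref{prop:main} with the symmetric bound $\kappa W_2^2(G^N_s,\mu_\infty^{\otimes N})\leq\JJ(G^N_s|\mathbf{b}^N,\mu_\infty^{\otimes N})$, choosing $\eta\in(0,\kappa)$, and applying Gronwall's lemma yields
\[
W_2^2(G_t^N,\mu_\infty^{\otimes N})\leq W_2^2(G_0^N,\mu_\infty^{\otimes N})e^{-(\kappa-\eta)t}+\frac{\eta^{-1}}{\kappa-\eta}\FF_N(b,\mu_\infty),
\]
so one takes $\alpha:=\kappa-\eta$. The variance-type cancellation already performed at the end of the proof of Proposition~\ref{prop:main} gives $\FF_N(b,\mu_\infty)\leq C$ uniformly in $N$, since each of the $N(N-1)$ summands is the centered second moment of $b(X_i,\cdot)-b*\mu_\infty(X_i)$ under the factorized law $\mu_\infty^{\otimes N}$; this is finite because $\nabla V,\nabla W$ have polynomial growth at most cubic while $\mu_\infty$ has all moments by its explicit Gibbs form (the sixth-moment hypothesis on $G_0^N$ being what propagates to allow comparison with $\mu_\infty^{\otimes N}$). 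Dividing by $N$ therefore produces the desired $C/N$ residual.

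The genuine obstacle is the symmetric $WJ(\kappa)$ inequality itself in the double-well case, since the strictly convex Bakry--Emery argument implicitly used in the discussion of \cite{Mal} discards the non-positive term $-2(\Delta\psi^N+\Delta\psi^{*,N}(\nabla\psi^N)-2dN)\leq 0$ supplied by Lemma~3.2 of \cite{BGG1}. Here one must retain this term and use it to absorb the non-convexity of $V$ inside the well. I would split the monotonicity defect $\lal\nabla V(u)-\nabla V(x),u-x\ral$ into a strictly positive piece, dominant for $|x|+|u|$ large thanks to the quartic growth of $V$, plus a bounded Lipschitz correction supported near the origin; the correction is then dominated by $W_2^2(\mu,\mu_\infty)$ up to the log-Sobolev/Poincar\'e constant of $\mu_\infty$, itself a small-parameter perturbation of a standard Gaussian when $a$ and $\e$ are small, whence the restrictions $a<a^*$ and $\e<\e_a$. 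The pairwise interaction $-\e\nabla W(x-y)=2\e(x-y)$ enters as an $O(\e)$ correction closable against the same positive piece. Tensorizing the resulting one-particle inequality into its symmetric $N$-particle version, along the lines sketched in Remark~\ref{rq:WJ}, is the last technical step and is precisely what Proposition~\ref{prop:WJ} is designed to deliver.
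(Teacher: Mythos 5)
Your proposal follows the paper's proof essentially verbatim: apply Proposition~\ref{prop:main} with $\mu_s\equiv\mu_\infty$, absorb the dissipation term via the symmetric $WJ(\kappa)$ inequality of Proposition~\ref{prop:WJ}, bound $\FF_N(b,\mu_\infty)$ uniformly in $N$ by the variance computation (the paper gets the sharper bound $\e^2\int|z|^2\mu_\infty(dz)$ because the $\nabla V$ part cancels and $\nabla W$ is linear), and conclude by Gronwall with $\eta<\kappa$; the verification of the hypotheses of Proposition~\ref{prop:main} via the entropy identity for $G_t^N$ and the moment bounds of Lemma~\ref{lem:mom} is also the paper's route. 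The only point you leave implicit is the uniqueness of $\mu_\infty$, which the paper deduces in one line by applying the $WJ(\kappa)$ inequality to a pair of stationary solutions.
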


	Note that the maximal depth of the wells $a^*$ comes form the unitary diffusion coefficient in \eqref{eq:NLPDE}. For a potential of given depth of wells, the result could also be obtained if we replace the  diffusion factor $\sqrt{2}$ in \eqref{eq:PS} with $\sqrt{2\sigma}$ for $\sigma$ large enough w.r.t the depth. For $\sigma$ not large enough the limit equation \eqref{eq:NLPDE} is known to admits several stationary solution (see \cite{HT}).\newline
	Also note that Theorem \ref{thm:main}, only provides uniform in time propagation of $\mu_\infty$-chaos with optimal $N^{-1}$ convergence rate. One can obtain  propagation of $\mu_t$-chaos for any $(\mu_t)_{t\geq 0}$ non stationary solution to \eqref{eq:NLPDE}, but with less sharp convergence rate in the 
	\begin{corollary}
		\label{cor:main}
			Let be the assumptions of Theorem \ref{thm:main} be in force. Let be $(\mu_t)_{t\geq 0}$ the solution to \eqref{eq:NLPDE} starting from $\mu_0\in \PP_6(\R^d)\cap L\ln L(\R^d)$, and $G_t^N\in \PP^{\textit{sym}}(\R^{dN})$ be the law at time $t>0$ of the solution to \eqref{eq:PS} with initial law $\mu_0^{\otimes N}$. There are $C>0$ and $\beta\in(0,1)$ such that for any $N\geq 2$ and $t\geq 0$ it holds
			\[
			\frac{W_2^2(\mu_t^{\otimes N},G_t^N)}{N}\leq CN^{-\beta}
			\]

	\end{corollary}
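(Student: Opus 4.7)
The plan is to interpolate between a short-time Gronwall estimate (exploiting the initial synchronization $G_0^N = \mu_0^{\otimes N}$) and a large-time estimate (exploiting Theorem \ref{thm:main} together with the relaxation of the nonlinear PDE to $\mu_\infty$). For the large-time regime, Theorem \ref{thm:main} specialized to $G_0^N = \mu_0^{\otimes N}$ reads
\[
\frac{W_2^2(G_t^N, \mu_\infty^{\otimes N})}{N} \leq W_2^2(\mu_0, \mu_\infty)\, e^{-\alpha t} + \frac{C}{N}.
\]
In parallel, the WJ$(\kappa)$ inequality for $\mu_\infty$ (Proposition \ref{prop:WJ}), plugged into the $W_2^2$ dissipation formula between the two solutions $\mu_t$ and $\mu_\infty$ of \eqref{eq:NLPDE}, yields exponential contraction $W_2^2(\mu_t, \mu_\infty) \leq W_2^2(\mu_0, \mu_\infty)\, e^{-\kappa t}$. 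Combining via the triangle inequality and the tensorization identity $W_2^2(\mu_t^{\otimes N}, \mu_\infty^{\otimes N}) = N W_2^2(\mu_t, \mu_\infty)$ gives the large-time bound
\[
\frac{W_2^2(\mu_t^{\otimes N}, G_t^N)}{N} \leq C_1 e^{-\alpha' t} + \frac{C_2}{N}, \qquad \alpha' := \min(\alpha, \kappa) > 0.
\]

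For the short-time regime, I would apply Proposition \ref{prop:main} with the vanishing initial datum $W_2^2(\mu_0^{\otimes N}, G_0^N) = 0$. Discarding the nonpositive Hessian contribution in $-\JJ$ leaves the drift term
\[
2 \int_{\R^{dN}} \lal \mathbf{b}^N(\nabla \psi_s^N(x)) - \mathbf{b}^N(x), \nabla \psi_s^N(x) - x \ral \mu_s^{\otimes N}(dx)
\]
and the fluctuation term $\eta^{-1} \FF_N(b, \mu_s)$. Because $\nabla V$ has cubic growth and $\nabla W$ is linear, local-Lipschitz estimates combined with uniform-in-time sixth-order moments on $\mu_s$ (available from $\mu_0 \in \PP_6(\R^d)$ and the exponential convergence $\mu_s \to \mu_\infty$) and on $G_s^N$ (propagated uniformly in $N$ from $G_0^N \in \PP_6^{\textit{sym}}(\R^{dN})$) allow the drift contribution to be bounded by $M \cdot W_2^2(\mu_s^{\otimes N}, G_s^N)$ for some explicit $M$, while $\FF_N(b, \mu_s) \leq C_3$ holds uniformly in $N, s$. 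Choosing $\eta$ small and applying Gronwall produces
\[
\frac{W_2^2(\mu_t^{\otimes N}, G_t^N)}{N} \leq \frac{C_4}{N} e^{M t}.
\]

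Balancing the two bounds at the crossover time $T(N) = (M + \alpha')^{-1} \ln N$ makes both of order $N^{-\beta}$ with $\beta = \alpha'/(M + \alpha') \in (0, 1)$: using the short-time bound for $t \leq T(N)$ and the large-time bound for $t \geq T(N)$ yields the announced uniform-in-time estimate. The main obstacle is the short-time step, and specifically the uniform-in-$N$ propagation of sixth-order moments for $G_t^N$ through the Liouville equation \eqref{eq:Loui}; these are needed to absorb the cubic nonlinearity of $\nabla V$ in the local-Lipschitz control of the drift term. This moment propagation is a standard energy/entropy computation of the kind already required to make the proof of Theorem \ref{thm:main} rigorous, so it should come for free once that proof is in place.
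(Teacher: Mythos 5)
Your overall architecture coincides with the paper's: for large times, the triangle inequality combining Theorem \ref{thm:main} with the contraction $W_2^2(\mu_t,\mu_\infty)\leq W_2^2(\mu_0,\mu_\infty)e^{-\kappa t}$ (obtained from the $WJ(\kappa)$ inequality and the dissipation identity) and the tensorization $W_2^2(\mu_t^{\otimes N},\mu_\infty^{\otimes N})=NW_2^2(\mu_t,\mu_\infty)$; for short times, a Gronwall bound issued from Proposition \ref{prop:main} with $W_2^2(\mu_0^{\otimes N},G_0^N)=0$; and a crossover at $T_N=\delta\ln N$ balancing $N^{-1}e^{Mt}$ against $e^{-\alpha' t}+CN^{-1}$. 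The large-time half and the final balancing are correct as written.

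The gap is in your justification of the short-time drift bound. Local Lipschitz estimates on $\nabla V$ combined with sixth-order moments do \emph{not} yield a bound of the drift term by $M\,W_2^2(\mu_s^{\otimes N},G_s^N)$: the local Lipschitz constant of $\nabla V$ at the pair $(\nabla_i\psi^N(x),x_i)$ grows like $1+|\nabla_i\psi^N(x)|^2+|x_i|^2$, so the integrand is $(1+|\nabla_i\psi^N|^2+|x_i|^2)\,|\nabla_i\psi^N-x_i|^2$, and no H\"older splitting of this against moment bounds produces something controlled linearly by $\int|\nabla_i\psi^N-x_i|^2\mu_s^{\otimes N}$ (one gets instead quantities of $W_3$ type, or a sublinear power of $W_2^2$ times moments, which breaks the Gronwall loop). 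The correct tool --- and the one the paper uses --- is the \emph{one-sided} (semi-convexity) estimate of Lemma \ref{lem:use}$(i)$: since $\nabla^2V(x)\geq -2aI_d$ for all $x$, one has $-\lal\nabla V(x)-\nabla V(y),x-y\ral\leq 2a|x-y|^2$ globally, and $\nabla W$ is linear, so the drift term is bounded by $4(a+\e)W_2^2(\mu_s^{\otimes N},G_s^N)$ with no moment input at all; this gives $M=4(a+\e+\eta)$. The sixth moments and the $L\ln L$ hypothesis are needed for a different purpose, namely to verify the integrability assumption $\int_0^t\int|\mathbf{b}^N-\nabla\ln G_s^N|^2G_s^N\,ds<\infty$ of Proposition \ref{prop:main} (because $|\nabla V(x)|^2\sim|x|^6$), not to close the Gronwall argument. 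With that substitution your proof closes and reproduces the paper's exponent $\beta$.
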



	%
	%
	%
	%
	\section{Optimal transport on $\PP^{\textit{sym}}(\R^{dN})$}
	We begin this section with the elementary 	
	\begin{lemma}
		\label{lem:SADW2}
		Let be $N\geq 2$, $G^N\in \PP_2^{\textit{sym}}(\R^{dN})$ and $F^N\in \PP_2^{\textit{sym}}(\R^d)$. Then for $2 \leq \ell\leq N$ it holds
		\[
		\ell^{-1}W_2^2(G^{N,\ell},F^{N,\ell} )\leq N^{-1} W_2^2(G^{N},F^{N} ),
		\]
		where $G^{N,\ell}$ (resp. $F^{N,\ell}$) is the marginal of $G^N$ (resp. $F^N$) on $\R^{d\ell}$.
	\end{lemma}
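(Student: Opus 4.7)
The plan is to take an optimal transport plan between $G^N$ and $F^N$, symmetrize it using the symmetric group action, and then extract the marginal on any $\ell$ coordinates. Symmetry will ensure that the average cost per coordinate equals $W_2^2(G^N,F^N)/N$, so that the $\ell$-coordinate marginal gives a coupling realizing the desired bound.

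More precisely, let $\pi \in \PP(\R^{dN} \times \R^{dN})$ be an optimal transport plan between $G^N$ and $F^N$ for the cost $c(x,y) = \sum_{i=1}^N |x_i - y_i|^2$, so that
\[
\int c(x,y) \, \pi(dx,dy) = W_2^2(G^N,F^N).
\]
For any permutation $\sigma \in S_N$, write $\sigma \cdot x = (x_{\sigma(1)},\ldots,x_{\sigma(N)})$, and define the symmetrized plan
\[
\tilde \pi(dx,dy) = \frac{1}{N!}\sum_{\sigma \in S_N} (\sigma \otimes \sigma)_{\#}\pi(dx,dy).
\]
Since $G^N$ and $F^N$ are both symmetric, each pushforward $(\sigma \otimes \sigma)_{\#}\pi$ is still a coupling of $G^N$ and $F^N$, hence so is $\tilde\pi$. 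Moreover the cost is invariant under the simultaneous diagonal action of $S_N$, i.e.\ $c(\sigma \cdot x, \sigma \cdot y) = c(x,y)$, so $\tilde\pi$ is still optimal.

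The measure $\tilde\pi$ is itself symmetric under the diagonal action of $S_N$. In particular, for every index $i \in \{1,\ldots,N\}$, the distribution of $(x_i,y_i)$ under $\tilde\pi$ is the same, and therefore
\[
\int |x_i - y_i|^2 \, \tilde \pi(dx,dy) = \frac{1}{N}\int c(x,y) \, \tilde\pi(dx,dy) = \frac{W_2^2(G^N,F^N)}{N}.
\]
Now let $\tilde\pi^{\ell}$ be the marginal of $\tilde\pi$ on the coordinates $(x_1,\ldots,x_\ell, y_1,\ldots,y_\ell)$. Its marginals on each side are exactly $G^{N,\ell}$ and $F^{N,\ell}$, so $\tilde\pi^\ell$ is a coupling between these two measures. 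By definition of $W_2^2$ and the previous identity,
\[
W_2^2(G^{N,\ell},F^{N,\ell}) \leq \int \sum_{i=1}^\ell |x_i-y_i|^2 \, \tilde\pi^\ell = \sum_{i=1}^\ell \int|x_i-y_i|^2 \, \tilde\pi = \frac{\ell}{N} W_2^2(G^N,F^N).
\]
Dividing by $\ell$ yields the claim. The main (and only) subtlety is ensuring that symmetrization preserves optimality, which is immediate from symmetry of the marginals together with invariance of the cost under the diagonal permutation action; the rest is a straightforward marginal extraction.
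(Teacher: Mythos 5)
Your proof is correct and follows essentially the same route as the paper's: restrict an optimal coupling to the first $\ell$ coordinates and use exchangeability to identify the per-coordinate cost with $W_2^2(G^N,F^N)/N$. The one difference is that you explicitly symmetrize the optimal plan under the diagonal $S_N$-action before taking marginals, which rigorously justifies the step that the paper dispatches with the words ``by symmetry'' (an arbitrary optimal coupling of two symmetric measures need not itself be exchangeable), so your version is if anything slightly more careful.
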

	\begin{remark}
		Such a result is likely to be already given somewhere in the literature. Indeed it can be seen roughly speaking as a super additivity property of the \textit{normalized} $W_2^2$ functional on $\PP_2^{\textit{sym}}(\R^{dN})$. But this is well known (see \cite{Car}, \cite{HM,MSN}) for the dissipation of the $W_2^2$ functional along the  heat flow, which is Boltzmann's entropy, as well as for the dissipation of Boltzmann's entropy along the  heat flow which is the Fisher information  (see also \cite{Sal} for the case of the dissipation of the Boltzmann's entropy along the fractional heat flow).\newline
		We will prove this Lemma using probabilist formalism, and it is the only time will do so. Indeed it is more handy when it comes to passing to marginals.		
	\end{remark}
	
	\begin{proof}
		Let be $((X_1,\cdots,X_N),(Y_1,\cdots,Y_N))$ two random vectors on $\R^{dN}$ of respective laws $G^N$ and $F^N$ such that $\LL((X_1,\cdots,X_N),(Y_1,\cdots,Y_N))$ is the optimal coupling between $G^N$ and $F^N$. Then by symmetry 
		\begin{align*}
			W_2^2(G^N,F^N)&=\E\lt[ \sum_{i=1}^N|X_i-Y_i|^2 \rt]=N \E\lt[ |X_1-Y_1|^2 \rt]\\
			&=\frac{N}{\ell} \ell\E\lt[ |X_1-Y_1|^2 \rt]=\frac{N}{\ell}\E\lt[\sum_{i=1}^{\ell} |X_i-Y_i|^2 \rt]\\
			&\geq \frac{N}{\ell} W_2^2(G^{N,\ell},F^{N,\ell}),
		\end{align*}
		since
		$((X_1,\cdots,X_\ell),(Y_1,\cdots,Y_\ell))$ is some coupling between $G^{N,\ell},F^{N,\ell}$.
	\end{proof}
	
	Therefore, if for some sequence $G_N\in \PP_2^{\textit{sym}}(\R^{dN})$ and some probability measure $\mu\in \PP_2(\R^d)$ it holds $\frac{W_2^2(\mu^{\otimes N},G_N)}{N}=o_{N}(1)$, then the sequence $(G_N)$ is $\mu$-chaotic, in the sense of Definition \ref{def:chaos}.\newline	

	We provide the result which enables to adapt the techniques of \cite{BGG1,BGG2} at the particle level in the
	\begin{proposition}
		\label{prop:use}
		Let be $F^N,G^N\in \PP^{\textit{sym}}_2(\R^{dN})$ two symmetric probabilities and let be $\psi^N\in\CC^2(\R^{dN})$ be the m.K.p. associated to the couple $(F^N,G^N)$ and $\psi^{N*}$ its convex conjugate. Then for any $x\in \R^{dN}$ it holds
		\[
		\Delta	 \psi^{N*}(\nabla\psi^N(x))\geq \sum_{i=1}^N Tr\lt[ (\nabla^2_{i,i}\psi^N(x))^{-1}\rt]
		\]
	\end{proposition}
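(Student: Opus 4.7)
The plan is to reduce the inequality to a purely linear algebraic statement about the block decomposition of a symmetric positive definite matrix, namely that the diagonal blocks of the inverse dominate (in the order of symmetric matrices) the inverses of the diagonal blocks.

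First, since $\psi^N$ is convex and $\nabla \psi^{N*}(\nabla \psi^N(x))=x$ holds identically, differentiating once more yields
\[
(\nabla^2 \psi^{N*})(\nabla \psi^N(x))\cdot \nabla^2 \psi^N(x)= I_{dN},
\]
so $(\nabla^2 \psi^{N*})(\nabla \psi^N(x))=(\nabla^2 \psi^N(x))^{-1}$. Taking the trace gives
\[
\Delta \psi^{N*}(\nabla \psi^N(x))=\mathrm{Tr}\bigl[(\nabla^2 \psi^N(x))^{-1}\bigr].
\]
It therefore suffices to prove that for any symmetric positive definite $dN\times dN$ matrix $H$, decomposed into $d\times d$ blocks $H_{ij}$, the following inequality holds:
\[
\mathrm{Tr}[H^{-1}]\geq \sum_{i=1}^N \mathrm{Tr}\bigl[H_{ii}^{-1}\bigr].
\]

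To prove this, fix $i\in\{1,\dots,N\}$ and write $H$ as a $2\times 2$ block matrix in which the first block is $H_{ii}$ (of size $d$) and the second block $H_{\bar\imath\bar\imath}$ (of size $d(N-1)$) is the minor obtained by removing the $i$-th block row and column; call the off-diagonal blocks $B_i$ and $B_i^\perp$. By the Schur complement formula,
\[
(H^{-1})_{ii}=\bigl(H_{ii}-B_i H_{\bar\imath\bar\imath}^{-1}B_i^\perp\bigr)^{-1},
\]
and since $H_{\bar\imath\bar\imath}\succ 0$, we have $H_{ii}-B_i H_{\bar\imath\bar\imath}^{-1}B_i^\perp\preceq H_{ii}$, hence by operator monotonicity of $A\mapsto A^{-1}$ on the cone of positive definite matrices, $(H^{-1})_{ii}\succeq H_{ii}^{-1}$. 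Taking the trace preserves this inequality, so $\mathrm{Tr}[(H^{-1})_{ii}]\geq \mathrm{Tr}[H_{ii}^{-1}]$.

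Summing over $i=1,\dots,N$ yields
\[
\mathrm{Tr}[H^{-1}]=\sum_{i=1}^N \mathrm{Tr}\bigl[(H^{-1})_{ii}\bigr]\geq \sum_{i=1}^N \mathrm{Tr}\bigl[H_{ii}^{-1}\bigr],
\]
which, applied to $H=\nabla^2\psi^N(x)$ (positive definite by convexity of $\psi^N$ and the $\CC^2$ regularity assumption), concludes the proof. The only nontrivial point is the Schur complement argument; neither symmetry of $G^N,F^N$ nor any structural property of the particles plays a role here---everything reduces to the pointwise block-matrix inequality applied to the Hessian of the Brenier map.
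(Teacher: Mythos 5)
Your proposal is correct, and the first half coincides with the paper: the reduction via $\nabla\psi^{N*}(\nabla\psi^N(x))=x$, differentiation to get $\nabla^2\psi^{N*}(\nabla\psi^N(x))=(\nabla^2\psi^N(x))^{-1}$, and the identification $\Delta\psi^{N*}(\nabla\psi^N(x))=\mathrm{Tr}\bigl[(\nabla^2\psi^N(x))^{-1}\bigr]$ are exactly the steps in the paper's proof of the proposition. Where you genuinely diverge is in the proof of the block-matrix inequality $\mathrm{Tr}[H^{-1}]\geq\sum_i\mathrm{Tr}[H_{ii}^{-1}]$, which the paper outsources to Proposition A.1 and proves there by a two-stage induction on $N$: first the scalar case via the rank-one bordering formula, then the $d\times d$ block case via a $2\times 2$ block inversion combined with an orthogonal diagonalization that reduces the Schur-complement term back to the scalar case. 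Your argument compresses all of this into a single non-inductive step: $(H^{-1})_{ii}$ equals the inverse of the Schur complement $H_{ii}-B_iH_{\bar\imath\bar\imath}^{-1}B_i^{\perp}\preceq H_{ii}$, and the order-reversing property of matrix inversion on the positive definite cone gives $(H^{-1})_{ii}\succeq H_{ii}^{-1}$ blockwise, which is strictly stronger than the trace inequality. This is cleaner and shorter; the only cost is that you invoke the Löwner antitonicity of $X\mapsto X^{-1}$ (which you call ``operator monotonicity'' --- it is in fact order-reversing, though you apply it in the correct direction), a standard but not entirely trivial fact, whereas the paper's induction uses only explicit block-inversion formulas and nonnegativity of $\mathrm{Tr}[vv^{\perp}]$-type terms. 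One small point to tighten: positive definiteness of $\nabla^2\psi^N(x)$ does not follow from convexity alone (which only gives semi-definiteness); as in the paper, it is the identity $(\nabla^2\psi^{N*})(\nabla\psi^N(x))\,\nabla^2\psi^N(x)=I_{dN}$ that supplies invertibility, and you do have that identity available.
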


	\begin{proof}
		First observe that by definition of $\psi^N$ it holds
		\[
		\nabla \psi^{N*}\lt( \nabla \psi^N(x) \rt)=x.
		\]	
		So that
		\[
		\nabla^2 \psi^{N*}\lt( \nabla \psi^N(x) \rt)\nabla\psi^N(x)=I_{dN}.
		\]
		Since $\nabla^2 \psi^N(x)$ is on the one hand symmetric by Schwarz's Theorem since $\psi^N\in \CC^2(\R^{dN})$, and invertible on the other hand, $\nabla^2\psi^N(x)$ is symmetric positive definite for any $x\in \R^{dN}$. Moreover 
		\[
		\Delta	 \psi^{N*}(\nabla\psi^N(x))=Tr\lt[\nabla^2 \psi^{N*}\lt( \nabla \psi^N(x) \rt) \rt]=Tr\lt[( \nabla^2\psi^N(x))^{-1}  \rt],
		\]
		and we conclude by using Proposition \ref{prop:sym}.
	\end{proof}
		
%
%
%
%

\section{Proof of Theorem \ref{thm:main} and Corollary \ref{thm:main}}

In this section, we set $b(x,y)=-\nabla V(x)-\e\nabla W(x-y)$ with
\bq
\label{eq:doublewell}
V(x)=|x|^4-a|x|^2, \ W(x)=-|x|^2
\eq
In order to prove the claimed uniform in time propagation of chaos result, we first need to extend the techniques of \cite{BGG1} used in the case of degenerately convex confinements to the case of non convex ones. We begin by the

\begin{lemma}
	\label{lem:use}
	The potentials $V,W$ satisfy
	\begin{itemize}
	\item[$(i)$]
	For any $(x,y)\in \R^{2d}$ 
	\[
	-\lal\nabla V(x)-\nabla V(y),x-y\ral \leq 2a|x-y|^2
	\]
	\item[$(ii)$]
	For any $R>\lt(\frac{a}{6}\rt)^{1/2}$, and $(x,y)\notin \B_{2R}\times \B_{2R}$
	\[
	\lal\nabla V(x)-\nabla V(y),x-y\ral \geq  4(R^2-\frac{a}{6})|x-y|^2,
	\]
	\item[$(iii)$] for any $\mu\in \PP_2(\R^d)$ and $R>0$
		\[
		\| (V+\e W*\mu)_{|\B_{3R} } \|_{L^\infty}\leq (3R)^4+(a+2\e)(3R)^2+ 2\e\int_{\R^d}|z|^2\mu(dz).
		\]
\end{itemize}
\end{lemma}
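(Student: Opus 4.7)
The plan is to dispose of all three bounds by direct algebraic manipulation, using the polynomial form $\nabla V(x)=4|x|^{2}x-2ax$ and the identity
\[
\lal |x|^{2}x-|y|^{2}y,\,x-y\ral \;=\; \tfrac12(|x|^{2}+|y|^{2})|x-y|^{2}+\tfrac12(|x|^{2}-|y|^{2})^{2},
\]
which I would derive by expanding $|x|^{2}x-|y|^{2}y=\tfrac12[(|x|^{2}+|y|^{2})(x-y)+(|x|^{2}-|y|^{2})(x+y)]$ and using $\lal x+y,x-y\ral=|x|^{2}-|y|^{2}$. Both summands on the right are manifestly nonnegative, so the quartic part of $V$ makes a one-sided (monotone) contribution to $\lal \nabla V(x)-\nabla V(y),x-y\ral$.

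Assertion $(i)$ then falls out immediately: one has
\[
\lal \nabla V(x)-\nabla V(y),x-y\ral \;=\; 4\lal |x|^{2}x-|y|^{2}y,x-y\ral -2a|x-y|^{2} \;\ge\; -2a|x-y|^{2},
\]
and a sign change yields the claim.

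For $(ii)$ the identity gives
\[
\lal \nabla V(x)-\nabla V(y),x-y\ral \;=\; 2(|x|^{2}+|y|^{2})|x-y|^{2}+2(|x|^{2}-|y|^{2})^{2}-2a|x-y|^{2}.
\]
The first term alone contributes only $\gtrsim 8R^{2}|x-y|^{2}$, which is insufficient to dominate $2a|x-y|^{2}$ down to the threshold $R^{2}>a/6$; the crux is therefore to harvest positivity from the $(|x|^{2}-|y|^{2})^{2}$ summand. For this I would apply the triangle-type estimate $(|x|+|y|)^{2}\geq |x-y|^{2}$ to obtain
\[
(|x|^{2}-|y|^{2})^{2}=(|x|+|y|)^{2}(|x|-|y|)^{2}\;\geq\; (|x|^{2}+|y|^{2}-2|x||y|)\,|x-y|^{2},
\]
which upgrades the previous identity to $\lal\nabla V(x)-\nabla V(y),x-y\ral \geq [4(|x|^{2}+|y|^{2}-|x||y|)-2a]|x-y|^{2}$. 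A short minimization finishes the job: assuming WLOG $|x|\geq 2R$, the map $t\mapsto |x|^{2}+t^{2}-|x|t$ on $\R_{+}$ attains its minimum $3|x|^{2}/4\geq 3R^{2}$ at $t=|x|/2$, so $|x|^{2}+|y|^{2}-|x||y|\geq 3R^{2}$. The bracket is therefore $\geq 12R^{2}-2a$, which exceeds $4(R^{2}-a/6)=4R^{2}-2a/3$ precisely under the standing hypothesis $R^{2}>a/6$.

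Assertion $(iii)$ is a routine sup estimate on $\B_{3R}$: directly $|V(x)|\leq (3R)^{4}+a(3R)^{2}$, while by the parallelogram bound $|x-z|^{2}\leq 2|x|^{2}+2|z|^{2}$ one obtains $|\e W\ast\mu(x)|\leq \e\int|x-z|^{2}\mu(dz)\leq 2\e|x|^{2}+2\e\int|z|^{2}\mu(dz)\leq 2\e(3R)^{2}+2\e\int|z|^{2}\mu(dz)$; adding the two contributions gives the claim. The genuinely nontrivial step of the lemma is the handling of the $(|x|^{2}-|y|^{2})^{2}$ term in $(ii)$, since $V$ fails to be convex on the ball $\{|z|^{2}<a/2\}$ and a naive Hessian integration along the segment from $y$ to $x$ — using only $\nabla^{2}V(z)\geq (4|z|^{2}-2a)I$ — loses too much and misses the sharp threshold $a/6$.
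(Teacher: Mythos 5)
Your proof is correct. For parts $(i)$ and $(iii)$ it follows essentially the same computations as the paper: the polarization of the cubic term $\lal|x|^2x-|y|^2y,x-y\ral\geq 0$ for $(i)$, and the expansion of $|x-z|^2$ with a Young-type absorption of the cross term for $(iii)$. Part $(ii)$ is where you genuinely diverge. The paper computes the Hessian $\nabla^2V(x)=8x\otimes x+(4|x|^2-2a)I_d$, reads off a convexity bound outside $\B_R$, and invokes \cite[Lemma 3.12]{BGG1} to convert convexity outside a ball into monotonicity on the complement of $\B_{2R}\times\B_{2R}$. You instead argue directly from the identity $\lal|x|^2x-|y|^2y,x-y\ral=\frac12(|x|^2+|y|^2)|x-y|^2+\frac12(|x|^2-|y|^2)^2$ combined with $(|x|+|y|)^2\geq|x-y|^2$, arriving at the bracket $4(|x|^2+|y|^2-|x||y|)-2a\geq 12R^2-2a=12(R^2-a/6)$. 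This buys two things. First, the argument is self-contained and yields a constant three times larger than the one stated. Second, it sidesteps a delicate point in the paper's route: the eigenvalues of $\nabla^2V(x)$ are $12|x|^2-2a$ in the radial direction but only $4|x|^2-2a$ in the orthogonal directions, so the uniform matrix bound $\nabla^2V(x)>12(R^2-a/6)I_d$ for $|x|>R$ (obtained in the paper by discarding the negative semidefinite term $8|x|^2(x\otimes x/|x|^2-I_d)$) holds only in the radial direction, and a uniform Hessian bound outside $\B_R$ would give $4(R^2-a/2)I_d$, which is useless for $R^2$ close to $a/6$. Your direct computation confirms that the stated conclusion of $(ii)$ is nonetheless true, with room to spare; your closing remark about the failure of naive Hessian integration is exactly on point.
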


\begin{proof}
	
	Observe that
	\[
	\nabla V(x)=(4|x|^2-2a)x
	\]

	\begin{align*}
	-\lal\nabla V(x)-\nabla V(y),x-y\ral&=-\lal (4|x|^2-2a)x-(4|y|^2-2a)y,x-y\ral\\
	&=-4\lal |x|^2x-|y|^2y,x-y\ral+2a|x-y|^2,
	\end{align*}
	using that
	\begin{align*}
	\lal |x|^2x-|y|^2y,x-y\ral= & |x|^4-\lal x, y\ral|x|^2-\lal x, y\ral|y|^2+|y|^4	\\
	&\geq (|x|^2+|x| |y| +|y|^2 )(|x|-|y|)^2\geq 0,\\
	\end{align*}
	concludes the proof of the first point. Then
	\begin{align*}
	\nabla^2 V(x)=&8x\otimes x + (4|x|^2-2a)I_d\\
	&=8|x|^2\lt( \frac{x\otimes x}{|x|^2}-I_d  \rt)+12(|x|^2-\frac{a}{6})I_d,
	\end{align*}
	so that for any $|x|>R$ it holds
	\begin{align*}
	\nabla^2 V(x) > 12(R^2-\frac{a}{6})I_d.
	\end{align*}
	We use \cite[Lemma 3.12]{BGG1} to conclude the proof of the second point. Then observe that
	\begin{align*}
	-V(x)-\e W*\mu(x)=&-|x|^4+a|x|^2+ \e \int_{\R^d} |x-z|^2\mu(dz)\\
	&=-|x|^4+(a+ \e)|x|^2+ 2\e x \int_{\R^d}z\mu(dz)+\e\int_{\R^d}|z|^2\mu(dz),
	\end{align*}
	and then for any $|x|\leq 3R$
	\begin{align*}
	\lt|V(x)+\e W*\mu(x)\rt|&\leq 	|x|^4+(a+ 2\e)|x|^2+2\e\int_{\R^d}|z|^2\mu(dz)\\
	&\leq (3R)^4+(a+ 2\e)(3R)^2+2\e\int_{\R^d}|z|^2\mu(dz),
	\end{align*}	
	so that the result is proved.

\end{proof}	
Next we need some moments estimates given in the 
\begin{lemma}
	\label{lem:mom}
	Let be $(\mu_t)_{t\geq 0}$ a solution to \eqref{eq:NLPDE} strating from $\mu_0\in \PP_2(\R^d)$. Then for any $t,\delta>0$ it holds
	\begin{align*}
		\int_{\R^d} |x|^2\mu_t(dx)\leq e^{-4\delta t}\int_{\R^d} |x|^2\mu_0(dx)+\frac{(a+\e+\delta)^2+d}{4\delta}
	\end{align*}
	In particular if $\mu_\infty$ is a stationary solution to \eqref{eq:NLPDE}, then it holds
	\[
	\int_{\R^d} |x|^2\mu_\infty(dx)\leq \sqrt{(a+\e)^2+d}.
	\]
	Moreover if $\mu_0\in \PP_k(\R^d)$ for some $k \geq 2$ then
	\[
	\sup_{t>0}\int_{\R^{d}}|x|^k\mu_t(dx)<\infty.
	\]
	Similarly if $(G_t^N)_{t\geq 0}$ is the solution to \eqref{eq:Loui} starting from $G_0^N\in \PP^{\text{sym}}_k(\R^{dN})$ then
	\[
	\sup_{t>0}\int_{\R^{dN}}|x_1|^kG_t^N(dx)<\infty.
	\]
\end{lemma}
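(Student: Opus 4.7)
The strategy is to compute the time derivative of $\int_{\R^d}|x|^k\mu_t(dx)$ (resp.\ $\int_{\R^{dN}}|x_1|^k G_t^N(dx)$) directly from \eqref{eq:NLPDE} (resp.\ from the Liouville equation \eqref{eq:Loui}), derive a scalar differential inequality in which the negative superlinear term coming from $-\lal x, \nabla V(x)\ral = -4|x|^4 + 2a|x|^2$ dominates all positive contributions, and close it by Gronwall.

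For the second moment $M_t := \int|x|^2\mu_t(dx)$, two integrations by parts against $|x|^2$ give
\[
\frac{dM_t}{dt} = 2d + 2\int_{\R^d} \lal x, b*\mu_t(x)\ral\,\mu_t(dx),
\]
and with $\nabla V(x) = (4|x|^2-2a)x$ and $\nabla W(z) = -2z$ a direct expansion yields
\[
\lal x, b*\mu_t(x)\ral = -4|x|^4 + 2(a+\e)|x|^2 - 2\e\lal x, m_t\ral,
\]
where $m_t := \int z\mu_t(dz)$. Dropping the non-positive term $-4\e|m_t|^2$ after integration, and using Jensen's inequality $\int|x|^4\mu_t \geq M_t^2$, produces $\tfrac{dM_t}{dt}\leq -8M_t^2 + 4(a+\e)M_t + 2d$. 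Completing the square via $-8M^2 + 4(a+\e+\delta)M \leq \tfrac12(a+\e+\delta)^2$ turns this into $\tfrac{dM_t}{dt} \leq -4\delta M_t + C_\delta$ for an explicit constant $C_\delta$ of the stated form, and Gronwall yields the first claim. The stationary bound follows by sending $t\to \infty$ and optimizing over $\delta>0$; a direct calculation shows the minimum of $C_\delta/(4\delta)$ is attained at $\delta^* = \sqrt{(a+\e)^2+d}$ and is dominated by $\sqrt{(a+\e)^2+d}$.

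For higher-order moments $M_k(t) := \int|x|^k\mu_t$ with $k\geq 2$, the analogous computation using $\nabla|x|^k = k|x|^{k-2}x$ and $\Delta|x|^k = k(k+d-2)|x|^{k-2}$ yields a differential inequality whose dominant term is $-4kM_{k+2}(t)$. The Jensen inequality $M_{k+2}\geq M_k^{(k+2)/k}$ together with Young's inequality absorbs the positive contributions proportional to $M_k$ and $M_{k-2}$ and the cross-term in $m_t$ (the latter controlled through the already established bound on $M_2$), leading to $\tfrac{dM_k}{dt}\leq -cM_k^{(k+2)/k}+C'$, from which uniform in time boundedness is immediate. The particle system version is entirely parallel: one computes $\tfrac{d}{dt}\int|x_1|^k G_t^N(dx)$ from \eqref{eq:Loui}, the interaction now producing additional cross terms of the form $\E[\lal x_1, x_j\ral]$ that are uniformly bounded in $N$ thanks to $|\lal x_1,x_j\ral|\leq(|x_1|^2+|x_j|^2)/2$ together with the symmetry of $G_t^N$. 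The only technical nuisance is justifying the vanishing of boundary terms in the integrations by parts, which is standard under the smoothness and decay assumptions on the densities in force throughout the paper.
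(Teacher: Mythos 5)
Your proof is correct and follows the same overall strategy as the paper: differentiate the moments against the equation, let the quartic confinement $-\lal x,\nabla V(x)\ral=-4|x|^4+2a|x|^2$ dominate the positive contributions, and close by Gronwall, with the particle-system case handled by the parallel computation (the paper uses It\^o's formula on $|X_t^i|^k$ plus exchangeability, which is equivalent to your weak formulation of \eqref{eq:Loui}). The one place you genuinely diverge is in extracting linear decay from the second-moment inequality: the paper splits $\R^d$ into $\{|x|\le\sqrt{(a+\e+\delta)/2}\}$ and its complement, so that $4|x|^2-2a-2\e\ge 2\delta$ outside the ball while the contribution inside is bounded by $(a+\e+\delta)^2$, whereas you pass through Jensen's inequality $\int|x|^4\mu_t\ge M_t^2$ to a Riccati inequality and complete the square. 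Both are valid; note only that your route yields the constant $\frac{(a+\e+\delta)^2+4d}{8\delta}$ rather than the displayed $\frac{(a+\e+\delta)^2+d}{4\delta}$, and the two are not comparable for all $\delta$. This is cosmetic: the only quantity used downstream (in Lemma \ref{lem:cst} and in the proof of Theorem \ref{thm:main}) is the optimized stationary bound $\sqrt{(a+\e)^2+d}$, which you correctly verify still follows from your constant.
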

\begin{proof}
	Let be $(\mu_t)_{t\geq 0}$ be a solution to \eqref{eq:NLPDE} then
	\begin{align*}
		\frac{d}{dt}\frac{1}{2}\int_{\R^d} |x|^2\mu_t(dx)=&-\int_{\R^d} x\cdot\lt( \nabla V(x)+\e\nabla W*\mu_t(x)\rt) \mu_t(dx) 	+d\int_{\R^d} \mu_t(dx)\\
		&=-\int_{\R^d} (4|x|^2-2a)|x|^2\mu_t(dx)+\e\int_{\R^{2d}}x\cdot(x-z)\mu_t(dz)\mu_t(dx)+d\\
		&=-\int_{\R^d} (4|x|^2-2a- \e)|x|^2\mu_t(dx)+\e\lt(\int_{\R^d} x \mu(dx)\rt)^2+d\\
		&\leq -\int_{\R^d} (4|x|^2-2a- 2\e)|x|^2\mu_t(dx)+d\\
	\end{align*}
	So that for any $\delta>0$ it holds
	\begin{align*}
		\frac{d}{dt}\frac{1}{2}\int_{\R^d} |x|^2\mu_t(dx)&\leq -\int_{\R^d} (4|x|^2-2a- 2\e)|x|^2\mu_t(dx)+d\\
		&\leq - 2\delta \int_{\R^d} \mb_{|x|> \sqrt{\frac{a+\e+\delta}{2}}}|x|^2\mu_t(dx) +2(a+\e) \int_{\R^d} \mb_{|x|\leq \sqrt{\frac{a+\e+\delta}{2}}}|x|^2\mu_t(dx)+d \\
		&=-2\delta \int_{\R^d} |x|^2\mu_t(dx)+2\lt(\delta+a+\e\rt)\int_{\R^d} \mb_{|x|\leq \sqrt{\frac{a+\e+\delta}{2}}}|x|^2\mu_t(dx)+d\\
		&\leq -2\delta\int_{\R^d} |x|^2\mu_t(dx)+(a+\e+\delta)^2+d
	\end{align*}
	
	Which yields
	\begin{align*}
		\int_{\R^d} |x|^2\mu_t(dx)\leq e^{-4\delta t}\int_{\R^d} |x|^2\mu_0(dx)+\frac{(a+\e+\delta)^2+d}{4\delta}
	\end{align*}
	In particular for any $\delta>0$
	\begin{align*}
		\int_{\R^d} |x|^2\mu_\infty(dx)\leq\frac{(a+\e+\delta)^2+d}{4\delta}.
	\end{align*}
	We minimize the r.h.s. by choosing $\delta=\sqrt{(a+\e)^2+d}$ which yields the desired result. \newline
	Then for $k\geq 2$
	 \begin{align*}
	 	\frac{d}{dt}\frac{1}{k}\int_{\R^d} |x|^k\mu_t(dx)=&-\int_{\R^d} |x|^{k-2} x\cdot\lt( \nabla V(x)+\e\nabla W*\mu_t(x)\rt) \mu_t(dx) 	+(d+k-2)\int_{\R^d} |x|^{k-2}\mu_t(dx)\\
	 	&=-\int_{\R^d} (4|x|^2-2a)|x|^k\mu_t(dx)+\e\int_{\R^{2d}}|x|^{k-2}x\cdot(x-z)\mu_t(dz)\mu_t(dx)\\
	 	&+(d+k-2)\lt(\int_{\R^d} |x|^{k}\mu_t(dx)\rt)^{\frac{k-2}{k}}\\
	 	&=-\int_{\R^d} (4|x|^2-2a- \e)|x|^k\mu_t(dx)+\e\lt(\int_{\R^d} |x|^{k-2}x \mu(dx)\rt)\lt(\int_{\R^d} z \mu(dx)\rt)\\
	 	&+(d+k-2)\lt(\int_{\R^d} |x|^{k}\mu_t(dx)\rt)^{\frac{k-2}{k}}\\
	 	&\leq -\int_{\R^d} (4|x|^2-2a- 2\e)|x|^k\mu_t(dx)+(d+k-2)\lt(\int_{\R^d} |x|^{k}\mu_t(dx)\rt)^{\frac{k-2}{k}}\\
	 \end{align*}
	 Using a similar argument as above and Young's inequality again yields for some constants $c,C>0$
	 \begin{align*}
	 	\frac{d}{dt}\int_{\R^d} |x|^k\mu_t(dx)&\leq -c\int_{\R^d} |x|^k\mu_t(dx)+C,
	 \end{align*}
	 
	 Which  using Gronwall's inequality
	 \begin{align*}
	 	\sup_{t>0}\int_{\R^d} |x|^k\mu_t(dx)<\infty.
	 \end{align*}
	  Then by Ito's rule, it holds
	 \begin{align*}
	 	|X_t^i|^k=&|X_0^i|^k-k\int_0^t |X_s^i|^{k-2}X_s^i\cdot \lt( \nabla V(X_s^i) +\frac{\e}{N}\sum_{j=1}^N\nabla W(X_s^i-X_s^j) \rt)ds+k\sqrt{2}\int_0^t |X_s^i|^{k-2}X_s^i\cdot dB_s^i\\
	 	&+(d+k-2)\int_0^t |X_s^i|^{k-2}ds,
	 \end{align*}	
	 so that taking the expectation and averaging over $i=1,\cdots,N$, we obtain by symmetry 
	  \begin{align*}
	  	\int_{\R^{dN}}|x_1|^kG_t^N(dx)&=\mei \E\lt[|X_t^i|^k\rt]\\
	  	&=\mei \E\lt[|X_0^i|^k\rt]-k\int_0^t\mei\E\lt[ |X_s^i|^{k-2}X_s^i\cdot \lt( \nabla V(X_s^i) +\frac{\e}{N}\sum_{j=1}^N\nabla W(X_s^i-X_s^j) \rt)\rt]ds\\
	  	&+(d+k-2)\int_0^t \E\lt[|X_s^i|^{k-2}\rt]ds.
	  \end{align*}	
	  and using symmetry and similar arguments as the one used above yields
	  \[
	  \sup_{t>0}\int_{\R^{dN}}|x_1|^kG_t^N(dx)<\infty.
	  \]
\end{proof}

Before giving the main Proposition of this Section, we need to look at the existence of stationary solution to \eqref{eq:NLPDE}. Consider the functional $F$ defined on $\PP_2(\R^d)$ as
\[
F(\mu)=\int_{\R^d} \mu\ln \mu + \int_{\R^d} V(x)\mu (dx)+\frac{\e}{2}\int_{\R^{2d}}W(x-y)\mu(dx)\mu(dy).
\]
We use \cite[Proposition 4.4, point (iii)]{BGG2} to deduce that there is $\mu_\infty\in \PP_2(\R^d)$ which minimizes $F$, and that such a measure is a stationary solution to \eqref{eq:NLPDE} and satisfies
\[
\mu_\infty(x)=\lt(\int_{\R^d}e^{-V(z)-\e W*\mu_\infty(z)}dz\rt)^{-1}e^{-V(x)-\e W*\mu_\infty(x)}=:Z^{-1}e^{-V_\e(x)}.
\]
We finally need to state the

\begin{lemma}
	\label{lem:cst}
	Let be $a,\e>0$ and for any $R>0$ define
		\begin{align*}
		&\CC_1(R,a,\e):=\lt(36R^2e^{2\sup_{|z|\leq 3R}V_\e(z)}\rt)^{-1}-2a,\\
		&\CC_2(R,a,\e):=4( R^2-\frac{a}{6} )-4\lt(36R^2\rt)^{-1}.
		\end{align*}
		There is $a^*>0$ such that if $a\in(0,a^*)$ and $\e\in(0,a/2)$, there are $\kappa_a>0$ and $R_a>\sqrt{\frac{a}{6}}$ such that	
		\begin{align*}
		\CC_1(R_a,a,\e)\wedge \CC_2(R_a,a,\e)>\kappa_a,
		\end{align*}
	\end{lemma}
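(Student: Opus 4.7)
The plan is to take $R_a = R_0$ to be a constant independent of $a$ and $\e$, and to exploit both the smallness of $a$ (and hence of $\e < a/2$) and the continuity of $\CC_1$, $\CC_2$ in $(a,\e)$. A convenient choice is $R_0 = 1$ (any $R_0 > 1/\sqrt{6}$ would do), which automatically satisfies the required $R_0 > \sqrt{a/6}$ provided $a^* \leq 6$.

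The bound on $\CC_2$ is immediate: by expanding the definition,
\[
\CC_2(1,a,\e) = 4 - \frac{2a}{3} - \frac{1}{9} \geq \frac{35}{9} - \frac{2a^*}{3},
\]
which is strictly positive for any $a^*$ small enough, and moreover uniform in $\e$. So the $\CC_2$ side is essentially free.

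For $\CC_1$, the task reduces to producing a uniform upper bound on $V_\e^{\max} := \sup_{|z|\leq 3R_0} V_\e(z)$. Recall that $V_\e = V + \e W * \mu_\infty$, so I would apply Lemma \ref{lem:use}(iii) with $\mu = \mu_\infty$, combined with the stationary moment estimate $\int_{\R^d} |z|^2 \mu_\infty(dz) \leq \sqrt{(a+\e)^2 + d}$ from Lemma \ref{lem:mom}, yielding
\[
V_\e^{\max} \leq (3R_0)^4 + (a+2\e)(3R_0)^2 + 2\e\sqrt{(a+\e)^2 + d}.
\]
Under $a \leq a^* \leq 1$ and $\e < a/2$, the right-hand side is dominated by some constant $M$ depending only on $d$ (and $R_0$), independent of $(a,\e)$. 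Consequently $\CC_1(R_0,a,\e) \geq \frac{1}{36 R_0^2 e^{2M}} - 2a$, and a final reduction of $a^*$, small enough that $2a^* \leq \frac{1}{72 R_0^2 e^{2M}}$, keeps this strictly positive.

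The only mild subtlety, and hence the ``main obstacle'' if any, is that $V_\e$ depends on the stationary measure $\mu_\infty$, which itself depends on $a$ and $\e$; however, the only feature of $\mu_\infty$ appearing in the estimate is its second moment, and Lemma \ref{lem:mom} gives a uniform control as $(a,\e) \to 0$. This is what makes $M$ genuinely independent of the parameters. Setting $\kappa_a := \min\bigl(\CC_1(R_0,a,\e),\, \CC_2(R_0,a,\e)\bigr)$ then yields the announced strictly positive lower bound.
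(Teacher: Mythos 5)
Your proposal is correct and follows essentially the same route as the paper: both arguments rest on bounding $\sup_{|z|\leq 3R}|V_\e(z)|$ uniformly in $(a,\e)$ via Lemma \ref{lem:use}$(iii)$ and the second-moment bound of Lemma \ref{lem:mom}, and then shrinking $a^*$ so that the $-2a$ term cannot destroy the positivity of $\CC_1$; the only difference is that you fix $R_a\equiv 1$ while the paper takes an $a$-dependent $R_a$ (the smallest radius making $\CC_2>4a$), which yields less catastrophic constants but changes nothing qualitatively. One cosmetic point: since $\kappa_a$ is later used to define $\e_a$, take it to be (half of) your two $\e$-uniform lower bounds $\frac{35}{9}-\frac{2a}{3}$ and $\frac{1}{36R_0^2e^{2M}}-2a$ rather than the $\e$-dependent minimum of $\CC_1$ and $\CC_2$.
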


\begin{proof}
	We begin by observing that by definition of $\mu_\infty$, point $(iii)$ of Lemma \ref{lem:use} and Lemma \ref{lem:mom} it holds
	\begin{align*}
	\sup_{|z|\leq 3R} |V_\e(z)|&\leq (3R)^4+(a+2\e)(3R)^2+2\e\int |z|^2\mu_\infty(dz)\\
	&\leq (3R)^4+(a+2\e)(3R)^2+2\e((a+\e)+\sqrt{d})\\
	&\leq (3R)^4+6(a/6+2\e/6)(3R)^2+72\e/6(a/6+\e/6)+2\sqrt{d}\e.
	\end{align*}
	Since $\e<a/2$, for any $R>\sqrt{\frac{a}{6}}$ it holds for some $C>0$
	\begin{align*}
	\sup_{|z|\leq 3R} |V_\e(z)|&\leq  C(R^4+R^2).
	\end{align*}
	For $a>0$, define
	\[
	R_a=\inf\{ R>0, \  ( R^2-\frac{a}{6} )>\lt( 36 R^2 \rt)^{-1} +a \}
	\]	
	So that it holds $\CC_2(R_a,a,\e)>4a$. Then by the observation of the beginning of the proof it holds
	\begin{align*}
		\CC_1(R,a,\e)&\geq \lt(36R^2\rt)^{-1}e^{-2C(R^4+R^2)}-2a
	\end{align*}	 
	we define the function $g(a)=\lt(36R_a^2\rt)^{-1}e^{-2 C(R_a^4+R_a^2) }-2a$. Since $g$ is continuous and $g(0)>0$, there is some $a^*>0$ such that for any $a\in (0,a^*)$,  $\CC_1(R_a,a,\e)\geq g(a)>0$, and the result is proved with $\kappa_a=(4a)\wedge g(a)$.

\end{proof}	

We can now give the main result of this Section, which from which will follow the main Theorems of this paper, in the

\begin{proposition}
	\label{prop:WJ}
	There is $a^*>0$, and for any $a\in(0,a^*)$, $\e_a>0$, such that if $\e\in(0,\e_a)$ and $\mu_\infty$ is a stationary solution to \eqref{eq:NLPDE} , then there is a constant $\kappa>0$ such that for any $N \geq 2$, $G^N\in \PP_2^{\text{sym}}(\R^{dN})$ it holds 
	\[
	\kappa W_2^2(G^N,\mu_\infty^{\otimes N})\leq \JJ(G^N|\mathbf{b}^N,\mu_\infty^{\otimes N}),
	\]
	i.e. $\mu_\infty$ satisfies a symmetric $WJ(\kappa)$ inequality.	
\end{proposition}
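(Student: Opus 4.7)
The strategy follows the extension, to confinements which are convex only outside a ball, of the WJ inequality technique developed in \cite{BGG1} for strictly convex potentials. Let $\psi^N$ denote the m.K.p.\ with $T:=\nabla\psi^N$ sending $\mu_\infty^{\otimes N}$ to $G^N$, set $A_i(x):=\nabla^2_{ii}\psi^N(x)$, and decompose
\[
\JJ(G^N\mid\mathbf{b}^N,\mu_\infty^{\otimes N})=\II_H+\II_V+\II_W,
\]
where $\II_H$ is the Hessian trace integral, $\II_V$ gathers the $V$-contributions (a single-index sum) and $\II_W$ the $W$-contributions. Since $\nabla W(z)=-2z$ is linear, a direct calculation reduces $\II_W$ to an explicit bilinear form in the displacements $T_i-x_i$ which Cauchy--Schwarz bounds in absolute value by $2\e W_2^2(G^N,\mu_\infty^{\otimes N})$; this term is perturbative and will be absorbed into the final constant by imposing $\e<\e_a$.

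For $\II_H$ the plan is to invoke Proposition \ref{prop:use} together with the elementary identity $\lambda+\lambda^{-1}-2=(\sqrt{\lambda}-1/\sqrt{\lambda})^2\ge 0$, applied eigenvalue-wise to the symmetric positive-definite matrices $A_i$, to obtain
\[
\II_H\ge \sum_{i=1}^N\int_{\R^{dN}}\bigl(Tr[A_i]+Tr[A_i^{-1}]-2d\bigr)\,d\mu_\infty^{\otimes N}\ge 0.
\]
The useful content of this bound is local: at points where the displacement $T_i(x)-x_i$ is sizeable the spectrum of $A_i$ must depart appreciably from the identity, and $\II_H$ detects this deviation.

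Next comes the good/bad decomposition used in \cite[Section 3.3]{BGG1}. Fix $R=R_a$ from Lemma \ref{lem:cst} and for each $i\in\{1,\dots,N\}$ partition $\R^{dN}=\Omega_i^{\mathrm{good}}\sqcup\Omega_i^{\mathrm{bad}}$ with $\Omega_i^{\mathrm{good}}:=\{x\in\R^{dN}:x_i\in\B_{2R}^c\text{ and }T_i(x)\in\B_{2R}^c\}$. On $\Omega_i^{\mathrm{good}}$, Lemma \ref{lem:use} (ii) yields the strict monotonicity $\langle\nabla V(T_i)-\nabla V(x_i),T_i-x_i\rangle\ge 4(R^2-a/6)|T_i-x_i|^2$, producing the constant $\CC_2(R_a,a,\e)$ in front of the good-set part of the displacement. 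On $\Omega_i^{\mathrm{bad}}$, Lemma \ref{lem:use} (i) merely lets us lose at most $2a|T_i-x_i|^2$; that deficit must be repaid by $\II_H$ through a local compensation step.

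This local step is the crux. Lemma \ref{lem:use} (iii), combined with the moment bound of Lemma \ref{lem:mom}, gives $\|V_\e\|_{L^\infty(\B_{3R})}<\infty$, so by the Holley--Stroock perturbation principle the restriction $\mu_\infty|_{\B_{3R}}$ inherits from the uniform measure on $\B_{3R}$ a Poincar\'e inequality with constant of order $36R^2\,e^{2\sup_{\B_{3R}}V_\e}$. Applied coordinate by coordinate to $T_i-x_i$ after subtracting its conditional mean, this exchanges the bad-set $L^2$ mass of the displacement for the bad-set $L^2$ mass of $\nabla_i(T_i-x_i)=A_i-I_d$, whose Frobenius norm is dominated by $Tr[A_i]+Tr[A_i^{-1}]-2d$ and hence, after summation, by $\II_H$. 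Combining the two regions and taking $\e_a$ so small that the $2\e$ loss coming from $\II_W$ does not eat the positive margin $\kappa_a$ of Lemma \ref{lem:cst} yields $\JJ\ge\kappa W_2^2(G^N,\mu_\infty^{\otimes N})$ for some $\kappa>0$. The principal technical obstacle will be exactly this localization: the optimal map is defined globally on $\R^{dN}$, so to legitimately invoke Poincar\'e on a single ball one needs either a careful cutoff argument to handle the boundary flux, or a passage to the conditional law of the $i$th coordinate given the others, all the while exploiting the symmetry of $G^N$ and $\mu_\infty^{\otimes N}$ to keep the constants dimension-free in $N$.
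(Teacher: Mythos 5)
Your architecture matches the paper's (the same decomposition $\mt^1+\mt^2+\mt^3$, the $W$-term absorbed as a $-4\e W_2^2(G^N,\mu_\infty^{\otimes N})$ perturbation, the good/bad splitting at radius $2R$ producing the constants $\CC_1,\CC_2$ of Lemma \ref{lem:cst}), but the mechanism you propose for the crucial compensation step --- trading the bad-set mass of the displacement for $\mt^1$ via a Holley--Stroock/Poincar\'e inequality on $\B_{3R}$ --- has two genuine gaps. First, the inequality you would need to close the argument, namely that the Dirichlet form $\int|\nabla_i(T_i-x_i)|^2=\int\|\nabla^2_{i,i}\psi^N-I_d\|_F^2$ is dominated by $\int Tr[\nabla^2_{i,i}\psi^N+(\nabla^2_{i,i}\psi^N)^{-1}-2I_d]$, is false: in the eigenbasis the first integrand is $\sum_k(\lambda_k-1)^2$ while the second is $\sum_k(\lambda_k-1)^2/\lambda_k$, so the claimed domination fails as soon as some eigenvalue exceeds $1$, the discrepancy being quadratic versus linear for large $\lambda_k$ (and there is no a priori upper bound on the Hessian of the Kantorovich potential). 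Hence $\mt^1$ does not control the Dirichlet form that Poincar\'e requires. Second, Poincar\'e bounds only the variance; the conditional mean of $T_i-x_i$ on $\B_{3R}$ is not small a priori, and controlling it requires an anchoring point where the displacement is already controlled by the good set --- this is exactly the content of the term $\RR_1^i$ in the paper and is absent from your sketch. (A minor further point: Lemma \ref{lem:use}~(ii) applies whenever $(x_i,T_i)\notin\B_{2R}\times\B_{2R}$, i.e.\ on the complement of $\XX_i^N$, which is strictly larger than your good set requiring \emph{both} points outside $\B_{2R}$.)

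The paper circumvents both problems with a one-dimensional radial argument along each ray $r\mapsto\tilde x^i_{r\theta}$: it chooses an anchor radius $r^i_\theta\in[R^i_\theta,3R]$ minimizing the displacement, whose contribution $\RR_1^i$ is bounded by the integral over the annulus $\B_{3R}\setminus\B_{2R}$, hence over $\R^{dN}\setminus\XX_i^N$, and propagates back into $\XX_i^N$ by the fundamental theorem of calculus. The propagation error is estimated via the weighted Cauchy--Schwarz factorization $\nabla^2_{i,i}\psi^N-I_d=\bigl((\nabla^2_{i,i}\psi^N)^{1/2}-(\nabla^2_{i,i}\psi^N)^{-1/2}\bigr)(\nabla^2_{i,i}\psi^N)^{1/2}$, which produces on one side exactly $Tr[\nabla^2_{i,i}\psi^N+(\nabla^2_{i,i}\psi^N)^{-1}-2I_d]$ (summable against $\mu_\infty^{\otimes N}$ and controlled by $\mt^1$ through Proposition \ref{prop:use}) and on the other a telescoping quantity $\lal\nabla_i\psi^N(\tilde x^i_{r\theta})-\nabla_i\psi^N(\tilde x^i_{r^i_\theta\theta}),\theta\ral$ bounded by $9R$ thanks to the localization; the density ratios on $\B_{3R}$ then yield the factor $e^{2\sup_{\B_{3R}}|V_\e|}$ that you correctly anticipated in the constant. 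To salvage your route you would need a functional inequality whose energy is $\sum_k(\lambda_k-1)^2/\lambda_k$ rather than the Frobenius norm, together with a substitute for the mean; as written, the step does not go through.
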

Most of the material of the proof of this Proposition is taken from \cite[Proposition 3.4]{BGG1}. Nevertheless we will write it completely. Because on the one hand we will need to make the constants in the estimates more precise since we are in a  non convex coefficients framework. And on the other hand, because we want to emphasize what differs from the classical and the $\PP^{\textit{sym}}(\R^{dN})$ contexts.

\begin{proof}
First recall that	
	\begin{align*}
		\JJ(G^N|\mathbf{b}^N,\mu^{\otimes N}_\infty) &= \int_{\R^{dN}}\lt(\Delta \psi_s^{N}(x)+\Delta \psi_s^{*N}(\nabla \psi_s^N(x))-2dN\rt) \mu^{\otimes N}_\infty(dx) ds\\
		&+\sum_{i=1}^N\int_{\R^{dN}} \lal \nabla V(\nabla_i \psi^N(x))-\nabla V(x_i)   , \nabla_i \psi^N(x)-x_i  
		\ral\mu^{\otimes N}_\infty(dx)\\
		&+\e\sum_{i=1}^N\int_{\R^{dN}} \lal \mej \nabla W(\nabla_i \psi^N(x)-\nabla_j\psi^{N}(x))-\nabla W(x_i-x_j)   , \nabla \psi_i^N(x)-x_i  \ral\mu^{\otimes N}_\infty(dx)\\
		&:=\mt^1+\mt^2+\mt^3
	\end{align*}	
	Recall that
	\[
	W_2^2(G^N,\mu^{\otimes N}_\infty)=\int_{\R^{dN}}\sum_{i=1}^N\lt|\nabla_i \psi^N(x)-x_i\rt|^2\mu^{\otimes N}_\infty
	\]
	
	\textit{Step one} : Estimate of $\mt^1$ \newline

	First we easily obtain
	\begin{align*}
	\mt^3&\geq -\frac{\e }{N}\sum_{i,j=1}^N\int_{\R^{dN}}\lt|\nabla_i \psi^N(x)-\nabla_j\psi^{N}(x))-(x_i-x_j)   \rt|\lt| \nabla \psi_i^N(x)-x_i \rt|\mu^{\otimes N}_\infty(dx)\\
	&\geq -4\e \sum_{i=1}^N\int_{\R^{dN}}\lt| \nabla \psi_i^N(x)-x_i \rt|^2\mu^{\otimes N}_\infty(dx)=-4\e W_2^2(G^N,\mu^{\otimes N}_\infty)
	\end{align*}
	
	\textit{Step two} : Estimate of $\mt^2$ \newline
	
	Let be $R>\lt(\frac{a}{6}\rt)^{1/2}$ to be fixed later.
	We denote for $i=1,\cdots,N$
	\[
	\XX_i^N=\{ x\in \R^{dN}, |\nabla \psi_i^N(x)|\leq 2R, \quad |x_i|\leq 2R   \}
	\]
	Then by points $(i),(ii)$ of Lemma \ref{lem:use} we have
	\begin{align*}
		\int_{\R^{dN}}\lal \nabla V(\nabla_i \psi^N(x))-\nabla V(x_i)   , \nabla_i \psi^N(x)-x_i \ral\mu^{\otimes N}_\infty&=\int_{\R^{dN}\setminus \XX_i^N }\lal \nabla V(\nabla_i \psi^N(x))-\nabla V(x_i)   , \nabla_i \psi^N(x)-x_i \ral\mu^{\otimes N}_\infty\\
		&+\int_{\XX_i^N }\lal \nabla V(\nabla_i \psi^N(x))-\nabla V(x_i)   , \nabla_i \psi^N(x)-x_i \ral\mu^{\otimes N}_\infty\\
		&\geq 4( R^3-\frac{a}{6} )\int_{\R^{dN}\setminus \XX_i^N }\lt|\nabla_i \psi^N(x)-x_i \rt|^2\mu^{\otimes N}_\infty\\
		&-2a\int_{\XX_i^N }\lt| \nabla_i \psi^N(x)-x_i \rt|^2\mu^{\otimes N}_\infty
	\end{align*}
	
	\textit{Step three} :  Estimate of $\mt^1$ \newline
	For the rest of the proof for $x=(x_1,\cdots,x_N)\in\R^{dN}$, $z\in \R^d$ we will denote
	\begin{align*}
	&x'_i=(x_1,\cdots,x_{i-1},x_{i+1},\cdots,x_N)\in \R^{d(N-1)}\\
	&\tilde{x}^i_z=(x_1,\cdots,x_{i-1},z,x_{i+1},\cdots,x_N)\in \R^{dN}
	\end{align*}

	 We begin by rewriting
	\begin{align*}
	\int_{\XX_i^N}&\lt|\nabla_i\psi^N(x)-x_i\rt|^2\mu_\infty^{\otimes N}(dx)=\alpha_dZ^{-1}\int_{\R^{d(N-1)}\times\mathbb{S}^{d-1}\times(0,\infty)} \mb_{\tilde{x}^i_{r\theta}\in \XX_i^N}|\nabla_i\psi^N(\tilde{x}^i_{r\theta})-r\theta|^2 e^{-V_\e(r\theta)} r^{d-1}drd\theta \mu_\infty^{\otimes (N-1)}(dx'_i)  \\
	\end{align*}
	We will write the rest of the proof as if $\alpha_d=Z$ to make the notations lighter.	Consider now $i=1,\cdots,N$, $(x_1,\cdots,x_{i-1},x_{i+1},\cdots,x_N)\in\R^{d(N-1)}$ and $\theta \in \mathbb{S}^{d-1}$ fixed and define
	\[
	R^i_\theta:=\sup\{ r\geq 0, \tilde{x}^i_{r\theta}\in \XX_i^N \}
	\]
	and $r^i_\theta>0$ as 
	\[
	|\nabla_i\psi^N(\tilde{x}^i_{r^i_\theta\theta})-r^i_\theta\theta|=\inf_{r\in [R^i_\theta,3R]}\{ \lt|\nabla_i\psi^N(\tilde{x}^i_{r\theta})-r\theta\rt|  \},
	\]
	 Then for any $r\in[0,R^i_\theta]$ it holds
	\begin{align*}
		\nabla_i\psi^N(\tilde{x}^i_{r\theta})-r\theta=&\nabla_i\psi^N(\tilde{x}^i_{r^i_\theta\theta})-r_\theta^i\theta+(r-r^i_\theta)\int_0^1  \lt(\nabla_{i,i}^2\psi^N(x^i_s)-I_{d}\rt) \cdot\theta ds\\
	\end{align*}
	with
	\begin{align*}
	x^i_s=&\tilde{x}^i_{r\theta}+s(\tilde{x}^i_{r^i_\theta\theta}-\tilde{x}^i_{r\theta})\\
	&=(x_1,\cdots,x_{i-1},(r+s(r-r^i_\theta))\theta,x_{i+1},\cdots,x_N)
	\end{align*}
	Hence
	\begin{align*}
		\int_{\XX_i^N}\lt|\nabla_i\psi^N(x)-x_i\rt|^2&\mu_\infty^{\otimes N}(dx) \\
		&\leq 2\int_{\R^{d(N-1)}}\int_{\mathbb{S}^{d-1}} \int_0^{\infty}\mb_{\tilde{x}^i_{r\theta}\in \XX_i^N} \lt|\nabla_i\psi^N(\tilde{x}^i_{r^i_\theta\theta})-r_\theta^i\theta\rt|^2 e^{-V_\e(r\theta)} r^{d-1}drd\theta \mu_\infty^{\otimes(N-1)}(dx'_i)\\
		&+2\int_{\R^{d(N-1)}}\int_{\mathbb{S}^{d-1}} \int_0^{\infty} \mb_{\tilde{x}^i_{r\theta}\in \XX_i^N}\lt|(r-r^i_\theta)\int_0^1  \lt(\nabla_{i,i}^2\psi^N(x^i_s)-I_{d}\rt) \cdot\theta ds\rt|^2 e^{-V_\e(r\theta)} r^{d-1}drd\theta \mu_\infty^{\otimes(N-1)}(dx'_i)\\
		&=2(\RR_1^i+\RR_2^i)
	\end{align*}
	$\bullet$ Estimate of $\RR_2^i$\newline
	
	First we use Holder's inequality to obtain
	\begin{align*}
	|(r-r^i_\theta)\int_0^1  \lt(\nabla_{i,i}^2\psi^N(x^i_s)-I_{d}\rt) \cdot\theta ds|^2& =	\lt|(r-r^i_\theta)\int_0^1  \lt((\nabla_{i,i}^2\psi^N(x^i_s))^{1/2}-(\nabla_{i,i}^2\psi^N(x^i_s))^{-1/2}\rt)(\nabla_{i,i}^2\psi^N(x^i_s))^{1/2} \cdot\theta ds\rt|^2\\
	&\leq(r-r^i_\theta)  \int_0^1 Tr\lt[ \lt((\nabla_{i,i}^2\psi^N(x^i_s))^{1/2}-(\nabla_{i,i}^2\psi^N(x^i_s))^{-1/2}\rt)^2  \rt]ds \\
	&\times(r-r^i_\theta) \int_0^1  \lt|(\nabla_{i,i}^2\psi^N(x^i_s))^{1/2} \cdot \theta\rt|^2 ds	
	\end{align*}
	but since
	\begin{align*}
		(r-r^i_\theta)\lt|(\nabla_{i,i}^2\psi^N(x^i_s))^{1/2} \cdot\theta\rt|^2=\lt((r-r^i_\theta)\nabla_{i,i}^2\psi^N(x^i_s)\cdot \theta\rt)\cdot\theta
	\end{align*}	
	we have
		\begin{align*}
			(r-r^i_\theta)\int_0^1  \lt|(\nabla_{i,i}^2\psi^N(x^i_s))^{1/2} \cdot \theta\rt|^2 ds&=\lt(\int_0^1\nabla_{i,i}^2\psi^N(x^i_s)\cdot (r-r^i_\theta) \theta ds\rt)\cdot \theta \\
			&=\lt( \nabla_i \psi^{N}(\tilde{x}^i_{r\theta})-\nabla_i\psi^N(\tilde{x}^i_{r^i_\theta\theta}) \rt)\cdot \theta,
		\end{align*}	
	and for $r>0$ such that $\tilde{x}^i_{r\theta}\in \XX_i^N$ we have by definition $\lt|\nabla_i\psi^N(\tilde{x}^i_{r\theta})\rt|\leq 2R$. Moreover, by definition of $\tilde{x}^i_{r^i_\theta\theta}$ and $R^i_\theta$ it holds
	\begin{align*}
		\lt|\nabla_i\psi^N(\tilde{x}^i_{r^i_\theta\theta}) \rt|&\leq\lt|\nabla_i\psi^N(\tilde{x}^i_{r^i_\theta\theta}) -r_\theta^i \theta \rt|+r_\theta^i\\
		&\leq \lt|\nabla_i\psi^N(\tilde{x}^i_{R^i_\theta\theta}) - R^i_\theta \theta \rt|+3R\\
		&\leq \lt|\nabla_i\psi^N(\tilde{x}^i_{R^i_\theta\theta}) \rt|+R^i_\theta +3R\leq 7R.
	\end{align*}
	Therefore
	\begin{align*}
	\RR^2_i&\leq 9R\int_{\R^{d(N-1)}}\int_{\mathbb{S}^{d-1}} \int_0^{\infty} \mb_{\tilde{x}^i_{r\theta}}(r-r^i_\theta) \int_0^1 D_i(x_s^i)ds e^{-V_\e(r\theta)} r^{d-1}drd\theta \mu_\infty^{\otimes(N-1)}(dx_i')\\
	\end{align*}
	with
	\begin{align*}
	D_i(x)&=Tr\lt[  \lt((\nabla_{i,i}^2\psi^N(x))^{1/2}-(\nabla_{i,i}^2\psi^N(x))^{-1/2}\rt)^2\rt]\\
	&=Tr\lt[  \nabla_{i,i}^2\psi^N(x)+(\nabla_{i,i}^2\psi^N(x))^{-1}-2I_d\rt]
	\end{align*}
	Then by definition of $R^i_\theta$ 
	\begin{align*}
	\int_{\mathbb{S}^{d-1}}&\int_0^{\infty}\mb_{x\in \XX_i^N}(r-r^i_\theta) \int_0^1 
	D_i(x_s^i)ds e^{-V_\e(r\theta)}r^{d-1}drd\theta\\
	&=\int_{\mathbb{S}^{d-1}}\int_0^{R^i_\theta} (r-r^i_\theta) \int_0^1 D_i(x_s^i)e^{-V_\e ((r+s(r_\theta^i-r))\theta)}e^{V_\e ((r+s(r_\theta^i-r))\theta)}ds r^{d-1} e^{-V_\e(r\theta)}drd\theta\\	
	&= e^{2\sup_{|z|\leq 3R}|V_\e(z)|} \int_0^{R^i_\theta}  \int_{\mathbb{S}^{d-1}}\int_r^{r_\theta^i}D_i(\tilde{x}^i_{u\theta})u^{d-1} e^{-V_\e(u\theta)} du d\theta dr\\
	&\leq 2Re^{\sup_{|z|\leq 3R}|V_\e(z)|}\int_{\R^d}\mb_{\B_{3R}}(z)D_i(\tilde{x}^i_z)\mu_\infty(dz)
	\end{align*}
so that
	\begin{align*}
	&\RR^2_i\leq 18R^2e^{2\sup_{|z|\leq 3R}|V_\e(z)|} \int_{\R^{dN}}Tr\lt[  \nabla_{i,i}^2\psi^N(\tilde{x}^i_z)+(\nabla_{i,i}^2\psi^N(\tilde{x}^i_z))^{-1}-2I_d\rt]e^{-V_\e(z)}dz\mu_\infty^{\otimes (N-1)}(dx'_i) \\
	&\leq  18R^2e^{2\sup_{|z|\leq 3R}|V_\e(z)|}  \int_{\R^{dN}}Tr\lt[  \nabla_{i,i}^2\psi^N(x)+(\nabla_{i,i}^2\psi^N(x))^{-1}-2I_d\rt]\mu_\infty^{\otimes N}(dx)
	\end{align*}	
	Finally using Proposition \ref{prop:use}
	\begin{align*}
		\sum_{i=1}^N\RR_i^2&\leq 18R^2e^{2\sup_{|z|\leq 3R}|V_\e(z)|}  \int_{\R^{dN}} \lt(\Delta \psi^N(x)+\Delta\psi^{*N}(\nabla\psi^N(x))-2dN\rt) \mu_\infty^{\otimes N}(dx)
	\end{align*}	
	
	$\bullet$ Estimate of $\RR^1_i$\newline
	By the respective definitions of $R_\theta^i$ and $r_\theta^i$ it holds
	\begin{align*}
		\int_{\mathbb{S}^{d-1}} \int_0^{\infty}\mb_{\tilde{x}^i_{r\theta}\in \XX_i^N}& |\nabla_i\psi^N(\tilde{x}^i_{r_\theta^i\theta})-r_\theta^i\theta|^2 e^{-V_\e(r\theta)} r^{d-1}drd\theta  \leq \int_{\mathbb{S}^{d-1}} \int_0^{2R} |\nabla_i\psi^N(\tilde{x}^i_{r_\theta^i\theta})-r_\theta^i\theta|^2 e^{-V_\e(r\theta)} r^{d-1}drd\theta \\
		&\leq \int_{\mathbb{S}^{d-1}} \int_0^{2R} R^{-1}\int_{2R}^{3R} |\nabla_i\psi^N(\tilde{x}^i_{u\theta})-u\theta|^2 du e^{-V_\e(r\theta)} r^{d-1}dr d\theta\\
		&\leq \int_0^{2R} \ R^{-1}\int_{\mathbb{S}^{d-1}}\int_{2R}^{3R} |\nabla_i\psi^N(\tilde{x}^i_{u\theta})-u\theta|^2  e^{V_\e(u\theta)}e^{-V_\e(u\theta)} u^{d-1} du d\theta e^{-V_\e(r\theta)} dr \\
		&\leq 2e^{2\sup_{z\in \B_{3R}} |V_\e(z)| }  \int_{\R^d} \mb_{\B_{3R}\setminus\B_{2R}}(z)|\nabla_i\psi^N(\tilde{x}_z^i)-z|^2  \mu_\infty(dz)
	\end{align*}	
	which yields by definition of $\XX_i^N$
	\begin{align*}
		\sum_{i=1}^N\RR^1_i&\leq2e^{2\sup_{z\in \B_{3R}} |V_\e(z)| } \sum_{i=1}^N \int_{\R^{dN}} \mb_{\B_{3R}\setminus\B_{2R}}(x_i)|\nabla_i\psi^N(x)-x_i|^2  \mu^{\otimes N}_\infty(dx)\\
		&\leq 2e^{2\sup_{z\in \B_{3R}} |V_\e(z)| } \sum_{i=1}^N \int_{\R^{dN}\setminus \XX_i^N}|\nabla_i\psi^N(x)-x_i|^2  \mu^{\otimes N}_\infty(dx)
	\end{align*}	
	
	Gathering all these estimates yields
		\begin{align*}
		\mt^1\geq \lt(36R^2e^{2\sup_{|z|\leq 3R}|V_\e(z)|}\rt)^{-1} 	\sum_{i=1}^N\int_{\XX_i^N}\lt|\nabla_i \psi^N(x)-x_i\rt|^2\mu_\infty^{\otimes N}(dx)-4\lt(36R^2\rt)^{-1}  \sum_{i=1}^N\int_{\R^{dN}\setminus \XX_i^N}  \lt|\nabla_i \psi^N(x)-x_i\rt|^2 \mu_\infty^{\otimes N}(dx)
		\end{align*}
	
	\textit{Step four} : Conclusion. \newline

	Gathering the results obtained in the above steps yields
	
	\begin{align*}
	\JJ(G^N|V,W, \mu_\infty^{\otimes N} )&\geq -4\e W_2^2(G^N,\mu^{\otimes N}_\infty)+ \int_{\R^{dN}} \lt(\Delta \psi^N(x)+\Delta\psi^{*N}(\nabla\psi^N(x))-2dN\rt) \mu_\infty^{\otimes N}(dx) \\
	&+4( R^2-\frac{a}{6} ) \sum_{i=1}^N\int_{\R^{dN}\setminus \XX_i^N }\lt|\nabla_i \psi^N(x)-x_i \rt|^2\mu^{\otimes N}_\infty-2a\sum_{i=1}^N\int_{\XX_i^N }\lt| \nabla_i \psi^N(x)-x_i \rt|^2\mu^{\otimes N}_\infty\\
	&\geq \CC_1(R,a,\e)\sum_{i=1}^N\int_{\XX_i^N}\lt|\nabla_i \psi^N(x)-x_i\rt|^2\mu_\infty^{\otimes N}(dx)+\CC_2(R,a,\e) \sum_{i=1}^N\int_{\R^{dN}\setminus \XX_i^N }\lt|\nabla_i \psi^N(x)-x_i \rt|^2\mu^{\otimes N}_\infty\\
	&-4\e W_2^2(G^N,\mu^{\otimes N}_\infty)\\
	&\geq \lt(\lt( \CC_1(R,a,\e) \wedge \CC_2(R,a,\e) \rt)-4\e \rt)W_2^2(G^N,\mu^{\otimes N}_\infty)
	\end{align*}
	 We finally choose $R=R_a$ as given by Lemma \ref{lem:cst} so that there is $\kappa_{a}>0$ such that for any $\e \in (0,a/2)$ it holds
	 \[
	 \lt( \CC_1(R_a,a,\e) \wedge \CC_2(R_a,a,\e) \rt)>\kappa_{a}.
	 \] 
	We then define $\e_a= (\kappa_{a}/4)\wedge (a/2)$ and $\kappa=\kappa_a-4\e$ and the result is proved.
\end{proof}

\textit{Proof of Theorem \ref{thm:main} and Corollary \ref{cor:main}}

We begin by checking that the technical assumptions are fulfilled. Let be $(G_t^N)_{t\geq 0}$ be the solution to \eqref{eq:Loui} with $b(x,y)=-\nabla V(x)-\e \nabla W(x-y)$, and $\mu_\infty$ be a stationary solution to \eqref{eq:NLPDE}. First the vector fields $\nabla V+\e\nabla W*\mu_\infty+\nabla \ln \mu_\infty$ and $\mathbf{b}^N-\nabla \ln G_t^N$ are locally Lipschitz since the solutions to \eqref{eq:Loui} or \eqref{eq:NLPDE} have smooth and positive density for any time $t>0$. Then observe that
	\begin{align*}
		\frac{d}{dt}\int_{\R^{dN}}G_t^N\ln G_t^N&=\int_{ \R^{2N} }\pa_tG^N (1+\ln G^N ) \\
		&\quad = \frac{1}{N}\sum_{i,j=1}^N\int_{ \R^{2N} } \nabla_i \cdot \lt( 
		b(x_i,x_j) G^N \rt) (1+\ln G^N )+ \int_{ \R^{2N}}\Delta G^N(1+\ln G^N)\\
		&\quad = \frac{1}{N}\sum_{i,j=1}^N\int_{ \R^{2N} } \nabla_i \cdot \lt( 
		b(x_i,x_j)  \rt)G^N-\int_{ \R^{2N} }\lt|\nabla \ln G^N\rt|^2G^N,
	\end{align*}
	and then
	\begin{align*}
		\int_0^t\int_{ \R^{2N} }\lt|\nabla \ln G_s^N\rt|^2G_s^N \ ds \leq &\int_{\R^{dN}}G_0^N\ln G_0^N-\int_{\R^{dN}}G_t^N\ln G_t^N\\
		&-\sum_{i=1}^N\int_0^t \int_{ \R^{2N} } \Delta V(x_i) G_s^N \ ds-\frac{\e}{N}\sum_{i,j=1}^N\int_0^t \int_{ \R^{2N} } \Delta W(x_i-x_j))G_s^N \ ds\\
		&=\int_{\R^{dN}}G_0^N\ln G_0^N-\int_{\R^{dN}}G_t^N\ln G_t^N-\sum_{i=1}^N\int_0^t \int_{ \R^{2N} }12d(|x_i|^2-a/6) G_s^N \ ds-2d\e N
	\end{align*}	 
	
	Moreover
	\begin{align*}
			\int_{ \R^{2N} }\lt|\mathbf{b}^N\rt|^2G_s^N  = &\sum_{i=1}^N\int_{\R^{dN}} \lt|  (4|x_i|^2-2a)x_i -2\e \mej  (x_i-x_j) \rt|^2G_s^N \\
			&\leq  \sum_{_i=1}^{N} \int_{\R^{dN}} C(|x_i|^6+1)	G_s^N
	\end{align*}	
	Then using Lemma \ref{lem:mom}, for any $t\geq 0$ it holds
	\[
	\int_0^t\int_{ \R^{dN} }\lt|\mathbf{b}^N-\nabla \ln G_s^N\rt|^2G_s^N \ ds<\infty.
	\]
	Similar computations would also yield 
	\[
	\int_{ \R^{d} }\lt|\nabla V+\e\nabla W*\mu_\infty +\nabla \ln\mu_\infty \rt|^2\mu_\infty <\infty.
	\]

By Proposition \ref{prop:WJ} and Remark \ref{rq:WJ}, it holds that if $a,\e>0$ satisfy the assumptions of Theorem \ref{thm:main}, then any $\mu_\infty$ stationary solution to \eqref{eq:NLPDE} satisfies a $WJ(\kappa)$ inequality. In particular it is unique. Indeed if $\tilde{\mu}_\infty$ is another stationary solution, then it holds for any $t\geq 0$
\begin{align*} 
W_2^2(\mu_\infty,\tilde{\mu}_\infty)&\leq W_2^2(\mu_\infty,\tilde{\mu}_\infty)-\int_0^t \JJ(\tilde{\mu}_\infty|V,W,\mu_\infty)ds\\
&\leq W_2^2(\mu_\infty,\tilde{\mu}_\infty)-t\kappa W_2^2(\mu_\infty,\tilde{\mu}_\infty)
\end{align*}
and then $W_2^2(\mu_\infty,\tilde{\mu}_\infty)=0$. Then by by definition of $\FF_N$ it holds for any $N \geq 2$
\begin{align*}
\FF_N(\nabla V+ \e \nabla W,\mu)	&\leq \frac{1}{N^2}\sum_{i\neq j}^N\int_{\R^{dN}} \int_{\R^d} \lt| \lt(\nabla V(x_i)+\e  \nabla W(x_i-x_j-z)\rt)-\lt(\nabla V(x_i)+\e  \nabla W(x_i-x_j)\rt)\rt|^2 \mu(dz)  \mu^{\otimes N}(dx)\\
&\leq \frac{\e^2}{N^2}\sum_{i\neq j}^N\int_{\R^{dN}} \int_{\R^d} \lt|   (x_i-x_j-z)- (x_i-x_j)\rt|^2 \mu(dz)  \mu^{\otimes N}(dx)\leq \e^2 \int_{\R^d}|z|^2\mu(dz)
\end{align*}	
So that using Propositions \ref{prop:main}, \ref{prop:WJ} and Lemma \ref{lem:mom}, we obtain for $\eta<\kappa$
 \begin{align*}
 	W_2^2&(\mu_t^{\otimes N},G^N_t)\leq  W_2^2(\mu_0^{\otimes N},G^N_0)-(\kappa-\eta)\int_0^t W^2_2(\mu_s^{\otimes N},G^N_s) ds+t\frac{2}{\eta}\e^2\sqrt{(a+\e)^2+d},
 \end{align*}
 and Theorem \ref{thm:main} is proved, using Gronwall's inequality. \newline

We now turn to the proof of Corollary \ref{cor:main}. Consider $(\mu_t)_{t\geq 0}$ the solution to \eqref{eq:NLPDE} starting from $\mu_0$. Then by \cite[Theorem 23.9]{Vil1} we have
\begin{align*}
	W_2^2(\mu_t,\mu_\infty)=W_2^2(\mu_0,\mu_\infty)-\int_0^t\JJ(\mu_s|V,W,\mu_\infty)ds,
\end{align*}
and since $\mu_\infty$ satisfies a $WJ(\kappa)$ inequality we deduce
\[
W_2^2(\mu_t,\mu_\infty)\leq W_2^2(\mu_0,\mu_\infty)e^{-\kappa t}.
\]
Then using Proposition \ref{prop:main}, Lemma \ref{lem:mom} and \cite[Lemma 3.2]{BGG1} we obtain for any $\eta>0$
\begin{align*} 
W_2^2(G_t^N,\tilde{\mu}^{\otimes N}_t)&\leq W_2^2(G_0^N,\mu^{\otimes N}_0)-\int_0^t \JJ(G_s^N|V,W,\tilde{\mu}^{\otimes N}_s)ds+\eta\int_0^tW_2^2(G_s^N,\tilde{\mu}^{\otimes N}_s)+\eta^{-1} \int_0^t \FF_N(\nabla V+\e\nabla W, \mu_s)ds \\
&\leq W_2^2(G_0^N,\mu^{\otimes N}_0)+ 4(a+\e+\eta) \int_0^t W_2^2(G_s^N,\tilde{\mu}_s^{\otimes N})ds+\frac{\e^2}{2(a+\e)}\int_0^t \lt( \int_{\R^d}|z|^2\mu_s(dz) \rt)  ds,
\end{align*}
which yields the classical synchronous coupling result
\[
W_2^2(G_t^N,\mu_t^{\otimes N})\leq (W_2^2(G_0^N,\mu^{\otimes N}_0)+C)e^{4(a+\e+\eta) t}.
\]
So that we have
\begin{align*}
\frac{W_2^2(G_t^N,\mu_t^{\otimes N})}{N}\leq \begin{cases}
& \frac{W_2^2(G_0^N,\mu_\infty^{\otimes N})}{N}e^{- \alpha t}+\frac{C}{N}+W_2^2(\mu_0,\mu_\infty)e^{-\kappa t} \text{ if } t>T_N  \\ 
& \lt(\frac{W_2^2(G_0^N,\mu^{\otimes N}_0)}{N}+\frac{C}{N}\rt)e^{4(a+\e+\eta) t} \text{ if } t<T_N
\end{cases}.
\end{align*}
Choosing $T_N=\delta \ln N $
\begin{align*}
\frac{W_2^2(G_t^N,\mu_t^{\otimes N})}{N}\leq \begin{cases}
& \lt(\frac{W_2^2(G_0^N,\mu_\infty^{\otimes N})}{N}+W_2^2(\mu_0,\mu_\infty)\rt)N^{-\alpha\delta }+\frac{C}{N} \text{ if } t>T_N  \\ 
& \lt(\frac{W_2^2(G_0^N,\mu_0)}{N}+\frac{C}{N}\rt)N^{4(a+\e+\eta) \alpha} \text{ if } t<T_N
\end{cases}
\end{align*}
We now assume that $G^N_0=\mu_0^{\otimes N}$, so that
\begin{align*}
	\frac{W_2^2(G_t^N,\mu_t^{\otimes N})}{N}\leq C  \max\lt( N^{-(1\vee \alpha \delta )}  , N^{-(1-4(a+\e+\eta) \delta)} \rt)
\end{align*}
for a constant $C>0$ which depends only on $a,\e$ and $\mu_0$ and $\mu_\infty$.

%
%

\appendix
\section{Superaddittivity of the trace of the inverse}

\begin{proposition}
	\label{prop:sym} Let be $N\geq 2$ and $S^N\in \MM_{dN}(\R)$ a symmetric positive definite matrix and its block $(S^N_{i,j})_{i,j=1\cdots,N}\in \MM_d(\R)$, then it holds
	\[
	Tr[ (S^N)^{-1} ]\geq \sum_{i=1}^NTr[ (S^N_{i,i})^{-1}]
	\]
	
\end{proposition}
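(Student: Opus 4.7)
The plan is to use the Schur complement block-inverse formula to compare, for each $i$, the $(i,i)$-block of $(S^N)^{-1}$ with $(S^N_{i,i})^{-1}$, and then sum over $i$.

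For a fixed $i\in\{1,\ldots,N\}$, I would permute the rows and columns of $S^N$ so as to put the $i$-th $d$-block first, writing
\[
\tilde{S}^N = \begin{pmatrix} S^N_{i,i} & B_i \\ B_i^\perp & C_i \end{pmatrix},
\]
where $C_i\in\MM_{d(N-1)}(\R)$ is the principal submatrix obtained by removing the $i$-th block row and column, and $B_i\in\MM_{d,d(N-1)}(\R)$ collects the off-diagonal blocks $(S^N_{i,j})_{j\neq i}$. Since $S^N$ is symmetric positive definite, so are $S^N_{i,i}$ and $C_i$ (as principal submatrices), and the Schur complement formula gives
\[
\bigl((S^N)^{-1}\bigr)_{i,i} = \bigl(S^N_{i,i} - B_i C_i^{-1} B_i^\perp\bigr)^{-1}.
\]

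Next I would invoke two standard facts: first, $B_i C_i^{-1} B_i^\perp \succeq 0$ because $C_i^{-1}$ is symmetric positive definite; second, the map $M\mapsto M^{-1}$ is order-reversing on the cone of symmetric positive definite matrices in the Löwner order. Applying the first gives $S^N_{i,i} - B_i C_i^{-1} B_i^\perp \preceq S^N_{i,i}$; applying the second then yields
\[
\bigl((S^N)^{-1}\bigr)_{i,i} = \bigl(S^N_{i,i} - B_i C_i^{-1} B_i^\perp\bigr)^{-1} \succeq (S^N_{i,i})^{-1}.
\]
Since the trace is monotone on symmetric positive semidefinite matrices, we deduce $Tr\bigl[((S^N)^{-1})_{i,i}\bigr]\geq Tr\bigl[(S^N_{i,i})^{-1}\bigr]$.

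Finally I would conclude by summing over $i$, using that the trace of $(S^N)^{-1}$ is the sum of the traces of its diagonal $d\times d$ blocks:
\[
Tr\bigl[(S^N)^{-1}\bigr] = \sum_{i=1}^N Tr\bigl[((S^N)^{-1})_{i,i}\bigr] \geq \sum_{i=1}^N Tr\bigl[(S^N_{i,i})^{-1}\bigr].
\]
The only point requiring a moment of care is checking that $S^N_{i,i}$ and $C_i$ are indeed invertible and positive definite so that the Schur complement representation is valid, but this follows immediately from the positive definiteness of $S^N$. There is no genuine obstacle; the statement is essentially a two-line consequence of Schur complements and Löwner monotonicity of inversion.
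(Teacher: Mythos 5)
Your proof is correct, and it takes a genuinely different route from the paper's. You argue directly, block by block: for each $i$ you identify $((S^N)^{-1})_{i,i}$ as the inverse of the Schur complement $S^N_{i,i}-B_iC_i^{-1}B_i^\perp$, note that this Schur complement is $\preceq S^N_{i,i}$ in the L\"owner order, and invoke the operator anti-monotonicity of $M\mapsto M^{-1}$ on positive definite matrices before summing the traces. The paper instead gives a self-contained double induction: it first proves the scalar case ($d=1$) by induction on $N$ using the explicit rank-one update of the inverse, and then proves the block case by induction on $N$, reducing the key inequality $Tr[(B-C^\perp A^{-1}C)^{-1}]\geq Tr[B^{-1}]$ to the scalar case by diagonalizing $B$ with an orthogonal conjugation. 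Both arguments hinge on the same Schur complement structure and the positive semidefiniteness of terms like $B_iC_i^{-1}B_i^\perp$; the difference is that you outsource the monotonicity to the standard fact that inversion reverses the L\"owner order (itself a short argument via conjugation by $A^{-1/2}$), which buys a two-line proof, whereas the paper's version avoids citing that fact and remains entirely elementary at the cost of two nested inductions. One small point worth stating explicitly in your write-up: the permutation putting the $i$-th block first acts by conjugation, $\tilde S^N=PS^NP^\perp$ with $P$ orthogonal, so the $(1,1)$ block of $(\tilde S^N)^{-1}$ is indeed the $(i,i)$ block of $(S^N)^{-1}$; with that remark included, your argument is complete.
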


\begin{proof}
	\textit{Step one}: scalar case \newline
	Let us denote the property
	\[
	\PP(N): \forall S\in \MM_{N}(\R) \ \text{symmetric positive definite matrix, it holds} \ Tr\lt[S^{-1} \rt]\geq \sum_{i=1}^N \frac{1}{S_{ii}}.
	\]
	We proceed by recursion, let be $N=2$ and let be 
	\[
	A=\begin{pmatrix}
	a& c \\ 
	c& b
	\end{pmatrix} ,
	\]
	a symmetric positive definite matrix.  Note that it holds $ab-c^2>0$, $a,b>0$ and 
	\[
	A^{-1}=(ab-c^2)^{-1}\begin{pmatrix}
	b& -c \\ 
	-c & a
	\end{pmatrix} ,
	\]
	So that
	\[
	Tr[A^{-1}]=\frac{b}{ab-c^2}+\frac{a}{ab-c^2}\geq \frac{1}{a}+\frac{1}{b},
	\]
	and $\PP(2)$ holds true.\newline 
	
	Assume now that $\PP(N)$ holds true for some $N\geq 2$. Let be $\mathcal{S}^{N+1}\in M_{N+1}$ be a symmetric definite positive matrix which we write as
	\[
	\mathcal{S}^{N+1}=\begin{pmatrix}
	\mathcal{S}^{N}& \delta_N  \\ 
	\delta^{\perp}_N& z
	\end{pmatrix} .
	\]
	Note that necessarily $\mathcal{S}^{N}\in \MM_{N}(\R)$ is symmetric positive definite (and so is $(\mathcal{S}^{N})^{-1}$), $\delta_N\in \R^N$ and $z>\delta_N^{\perp}(\mathcal{S}^{N})^{-1}\delta_N$. Then
	\[
	(\mathcal{S}^{N+1})^{-1}=\begin{pmatrix}
	(\mathcal{S}^{N})^{-1}+\frac{(\mathcal{S}^{N})^{-1}\delta_N\delta_N^{\perp}(\mathcal{S}^{N})^{-1}}{z-\delta_N^{\perp}(\mathcal{S}^{N})^{-1}\delta_N}& -\frac{(\mathcal{S}^{N})^{-1}\delta_N}{z-\delta_N^{\perp}(\mathcal{S}^{N})^{-1}\delta_N}  \\ 
	-\frac{\delta_N^{\perp}(\mathcal{S}^{N})^{-1}}{z-\delta_N^{\perp}(\mathcal{S}^{N})^{-1}\delta_N}& \frac{1}{z-\delta_N^{\perp}(\mathcal{S}^{N})^{-1}\delta_N}
	\end{pmatrix} 
	\]
	so that
	\begin{align*}
	Tr[(\mathcal{S}^{N+1})^{-1}]&=Tr\lt[ (\mathcal{S}^{N})^{-1}+\frac{(\mathcal{S}^{N})^{-1}\delta_N\delta_N^{\perp}(\mathcal{S}^{N})^{-1}}{z-\delta_N^{\perp}(\mathcal{S}^{N})^{-1}\delta_N} \rt] +\frac{1}{z-\delta_N^{\perp}(\mathcal{S}^{N})^{-1}\delta_N}\\
	& =Tr\lt[ (\mathcal{S}^{N})^{-1}\rt]+\frac{1}{z-\delta_N^{\perp}(\mathcal{S}^{N})^{-1}\delta_N} Tr\lt[(\mathcal{S}^{N})^{-1}\delta_N\delta_N^{\perp}(\mathcal{S}^{N})^{-1}\rt] +\frac{1}{z-\delta_N^{\perp}(\mathcal{S}^{N})^{-1}\delta_N}.
	\end{align*}
	Also note that
	\begin{align*}
	Tr\lt[(\mathcal{S}^{N})^{-1}\delta_N\delta_N^{\perp}(\mathcal{S}^{N})^{-1}\rt]=\lt|(\mathcal{S}^{N})^{-1}\delta_N \rt|^2\geq 0.
	\end{align*}
	Finally, we have
	\begin{align*}
	Tr[(\mathcal{S}^{N+1})^{-1}]&\geq  Tr\lt[ (\mathcal{S}^{N})^{-1}\rt]+\frac{1}{z-\delta_N^{\perp}(\mathcal{S}^{N})^{-1}\delta_N}\\
	&\geq Tr\lt[ (\mathcal{S}^{N})^{-1}\rt]+\frac{1}{z}
	\end{align*}
	which concludes the first step.\newline 
	
	\textit{Step two} multidimensional case.
	
	Let us denote the property
	\[
	\PP(N): \forall S\in \MM_{dN}(\R) \ \text{symmetric positive definite matrix, it holds} \  Tr\lt[S^{-1} \rt]\geq \sum_{i=1}^N Tr[(S_{ii})^{-1}].
	\]
	We proceed by recursion, let be $N=2$ and let be $S\in \MM_{2d}(\R)$
	\[
	S=\begin{pmatrix}
	A& C \\ 
	C^{\perp}& B
	\end{pmatrix} ,
	\]
	a symmetric positive definite matrix. Then 
	\[
	S^{-1}=\begin{pmatrix}
	A& C \\ 
	C^{\perp}& B
	\end{pmatrix}^{-1} = \begin{pmatrix}
	A^{-1}+A^{-1}C(B-C^\perp A^{-1}C)^{-1}C^\perp A^{-1}& -A^{-1}C(B-C^\perp A^{-1}C)^{-1} \\ 
	-(B-C^\perp A^{-1}C)^{-1}CA^{-1}& (B-C^\perp A^{-1}C)^{-1}
	\end{pmatrix} ,
	\]
	So that
	\begin{align*}
	Tr[S^{-1}]&=Tr\lt[ A^{-1}\rt]+Tr\lt[ (B-C^\perp A^{-1}C)^{-1}\rt]+Tr\lt[A^{-1}C(B-C^\perp A^{-1}C)^{-1}C^\perp A^{-1} \rt]\\
	&\geq Tr\lt[ A^{-1}\rt]+Tr\lt[ (B-C^\perp A^{-1}C)^{-1}\rt],
	\end{align*}
	Let be $O\in \MM_d(\R)$ be an orthonormal matrix such that
	\[
	OBO^{-1}=\begin{pmatrix}
	\lambda_1& 0 & \cdots &  & 0 \\ 
	0& \lambda_2  &  &  & 0 \\ 
	&  &  &  &  \\ 
	&  &  &  & 0 \\ 
	0&  & \cdots &  & \lambda_d 
	\end{pmatrix} 
	\]
	So that 
	\begin{align*}
	O(B-C^\perp A^{-1}C)O^{-1}=\begin{pmatrix}
	\lambda_1-a_{1,1}& a_{1,2} & \cdots &  & a_{1,d} \\ 
	& \lambda_2-a_{2,2}  &  &  &  \\ 
	&  &  &  &  \\ 
	&  &  &  &  \\ 
	a_{1,d}	&  &  &  & \lambda_d- a_{d,d}
	\end{pmatrix} 
	\end{align*}	
	with 
	\[
	(a_{i,j})_{i,j=1\cdots,N}=	OC^\perp A^{-1}CO^{-1}
	\]
	Observe that since $O(B-C^\perp A^{-1}C)O^{-1}$ is symmetric definite positive, it holds $\lambda_k>a_{k,k}>0$ for any $k=1\cdots,d$. So that 
	\begin{align*}
	Tr\lt[ (B-C^\perp A^{-1}C)^{-1} \rt]&=Tr\lt[ O^{-1}(B-C^\perp A^{-1}C)^{-1}O \rt]\\
	&\geq \sum_{k=1}^d \frac{1}{\lambda_k-a_{k,k}}\\
	&\geq \sum_{k=1}^d \frac{1}{\lambda_k}=Tr\lt[(B^{-1})\rt],
	\end{align*}
	where we used the commutativity of the trace in the first and last line, and the first step of this proof in the second. And $\PP(2)$ holds true.\newline 
	
	Assume now that $\PP(N)$ holds true for some $N\geq 2$. Let be $\mathcal{S}^{N+1}\in M_{d(N+1)}$ be a symmetric matrix which we write as
	\[
	\mathcal{S}^{N+1}=\begin{pmatrix}
	\mathcal{S}^{N}& \delta_N  \\ 
	\delta^{\perp}_N& Z
	\end{pmatrix} 
	\]
	with $\mathcal{S}^{N}\in \MM_{dN}(\R)$ symmetric positive definite, $\delta_N\in \MM_{d,dN}(\R)$ and $Z\in \MM_d(\R)$. Then denoting $\tilde{Z}=(Z-\delta_N^{\perp}(\mathcal{S}^{N})^{-1}\delta_N)^{-1}$ 
	\[
	(\mathcal{S}^{N+1})^{-1}=\begin{pmatrix}
	(\mathcal{S}^{N})^{-1}+\tilde{Z}(\mathcal{S}^{N})^{-1}\delta_N\delta_N^{\perp}(\mathcal{S}^{N})^{-1}& -\tilde{Z}(\mathcal{S}^{N})^{-1}\delta_N  \\ 
	-\tilde{Z}\delta_N^{\perp}(\mathcal{S}^{N})^{-1}& \tilde{Z}
	\end{pmatrix} 
	\]
	so that
	\begin{align*}
	Tr[(\mathcal{S}^{N+1})^{-1}]&=Tr\lt[ (\mathcal{S}^{N})^{-1}\rt]+Tr\lt[(\mathcal{S}^{N})^{-1}\delta_N\tilde{Z}\delta_N^{\perp}(\mathcal{S}^{N})^{-1} \rt]+Tr[\tilde{Z}]\\
	\end{align*}
	But with the same argument as the one used above we have
	\begin{align*}
	Tr\lt[(\mathcal{S}^{N})^{-1}\delta_N\tilde{Z}\delta_N^{\perp}(\mathcal{S}^{N})^{-1} \rt]\geq 0,	\ Tr[\tilde{Z}]\geq Tr\lt[Z^{-1}\rt],
	\end{align*}
	and the conclusion follows.
\end{proof}

%
%

\section*{Acknowledgements}
The author was supported by the Fondation des Sciences Math\'ematiques de Paris and Paris Sciences \& Lettres Universit\'e, and warmly thanks Maxime Hauray and Arnaud Guillin for many advices, comments and discussions which have made this work possible.

%
%
%
%


\begin{thebibliography}{99}

	
	\bibitem{Amb}
	L. Ambrosio, N. Gigli, G. Savaré,
	\newblock Gradient Flows in Metric Spaces and in the Space of Probability Measures,
	\newblock Lectures Math. ETH Zürich, Birkhäuser, Basel, 2008
	
	\bibitem{BGG1}
	F. Bolley, I. Gentil, A. Guillin. 
	\newblock Convergence to equilibrium in Wasserstein distance for Fokker–Planck equations.
	\newblock Journal of Functional Analysis 263 (2012) 2430–2457.
	
	\bibitem{BGG2}
	F. Bolley, I. Gentil, A. Guillin. 
	\newblock Uniform Convergence to Equilibrium
	for Granular Media.
	\newblock Arch. Rational Mech. Anal. 208 (2013) 429–445.
	
	\bibitem{Car}
	E. Carlen.
	\newblock Superadditivity of Fisher’s Information and Logarithmic	Sobolev Inequalities.
	\newblock Journal of Functional Analysis, 101, 194-211 (1991).
	
	R. L. Dobrushin, Vlasov equations,
	Funktsional.
	Anal. i Prilozhen.
	, 1979, Volume 13, Issue 2, 48–
	58
		
	\bibitem{Dob}
	R. L. Dobrushin,
	\newblock  Vlasov equations,
	\newblock Anal. i Prilozhen., 1979, Volume 13, Issue 2, 48–58
	
	
	\bibitem{Gcou}
	A. Durmus, A. Eberle, A. Guillin, R. Zimmer. 
	\newblock An Elementary Approach To Uniform In Time Propagation Of Chaos.
	\newblock preprint https://arxiv.org/abs/1805.11387


	\bibitem{FG}
	N. Fournier, A. Guillin. 
	\newblock On the rate of convergence in Wasserstein distance of the empirical measure.
	\newblock Probability Theory and Related Fields, 162, (2015), 707--738.
	
	\bibitem{MSN}
	N. Fournier, M. Hauray, S. Mischler. 
	\newblock Propagation of chaos for the 2D viscous vortex model.
	\newblock J. Eur. Math. Soc., Vol. 16, No 7, 1423-1466, 2014.
	
	\bibitem{Gol}
	F. Golse, C. Mouhot, T. Paul,
	\newblock On the Mean Field and Classical Limits of Quantum
	Mechanics.
	\newblock Commun. Math. Phys. 343, 165–205 (2016)
	 
	 
	 	
	\bibitem{HM}
	M. Hauray, S. Mischler. 
	\newblock On Kac's chaos and related problems,
	\newblock J. Funct. Anal., Volume 266, P. 6055–6157,(2014).

	\bibitem{HT}
	S. Herrmann, J. Tugaut. 
	\newblock Non-uniqueness of stationary measures for
	self-stabilizing processes,
	\newblock Stochastic Processes and their Applications 120 (2010) 1215–1246.



	\bibitem{Hol}
	T. Holding, 
	\newblock Propagation of chaos for Hölder continuous interaction kernels via Glivenko-Cantelli.
	\newblock Preprint, arXiv:1608.02877, (2016).
	
	\bibitem{JW1}
	P.-E. Jabin, Z. Wang, 
	\newblock Mean field limit and propagation of chaos for
	Vlasov systems with bounded forces.
	\newblock Journal of Functional Analysis, Volume 271, Issue 12, 15 December 2016, Pages 3588-3627.
	
	\bibitem{JW2}
	P.-E. Jabin, Z. Wang, 
	\newblock Quantitative estimate of propagation of chaos for stochastic systems with $W^{-1,\infty}$ kernels.
	\newblock Invent. math. (2018) 214: 523.
	
	\bibitem{Mal}
	F. Malrieu, 
	\newblock Logarithmic Sobolev inequalities for some nonlinear PDE’s
	\newblock Stochastic Processes and their Applications 95 (2001) 109–132.
	
	
	\bibitem{Mck}
	H.-P. McKean, 
	\newblock A class of markov processes associated with nonlinear parabolic equations.
	\newblock Proc Natl Acad Sci U S A. 1966 Dec;56(6):1907-11.
	
	\bibitem{Sal}
	S. Salem, 
	\newblock Propagation of chaos for some 2 dimensional fractional Keller Segel equations in diffusion dominated and fair competition cases,
	\newblock preprint, https://arxiv.org/abs/1712.06677.


	\bibitem{Szn}
	A.-S. Sznitman, 
	\newblock Topics in propagation of chaos ,
	\newblock In {\em {\'{E}}cole d{'}{\'{E}}t{\'{e}} de Probabilit{\'{e}}s de Saint-Flour XIX{--}-1989}, volume 1464, chapter Lecture Notes in Math., pages 165--251. Springer, Berlin, 1991.
	
	\bibitem{Vil}
	C. Villani, 
	\newblock Of triangles, gases, prices and men,
	\newblock Huawei-IHES Workshop on Mathematical Science, Tuesday May 5th 2015
	\newblock https://www.youtube.com/watch?v=zo46TEp6FB8.
	
	\bibitem{Vil1}
	C. Villani, 
	\newblock Optimal Transport, Old and New,
	\newblock Grundlehren Math. Wiss., vol. 338, Springer, Berlin, 2009.
	
	

	
\end{thebibliography}
\end{document}